\documentclass[12pt, reqno]{amsart}

\usepackage{amsfonts,amssymb}
\usepackage{graphicx}
\usepackage{epsfig}
\usepackage{latexsym}
\usepackage{amsmath,amsthm}
\usepackage{mathrsfs}
\usepackage{graphicx} 
\usepackage{amsmath}
\usepackage{amsthm}
\usepackage{enumitem}

\usepackage{blindtext}
\usepackage{tcolorbox}
\usepackage{tikz}
\usepackage{tikz-cd}
\usepackage[all,cmtip]{xy}
\usepackage{xcolor}
\usepackage{graphicx}
\usepackage{tikz-cd}
\usepackage{comment}
\usepackage[dvipsnames]{xcolor}

\usepackage{bbm}
\usepackage[colorlinks]{hyperref}
\hypersetup{colorlinks,linkcolor={red},citecolor={blue},urlcolor={red}}
\theoremstyle{plain}
\newtheorem{theorem}{Theorem}[section]
\newtheorem{lemma}[theorem]{Lemma}

\newtheorem{remark}[theorem]{Remark}

\newtheorem{proposition}[theorem]{Proposition}
\newtheorem{corollary}[theorem]{Corollary}
\numberwithin{equation}{section}

\begin{document}
	\baselineskip=15.5pt
	
	\title{Real Slices of Parabolic $\mathrm{SL}(r,\mathbb{C})$-Opers}
		\author[S. Amrutiya]{Sanjay~Amrutiya}
	\address{Department of Mathematics, IIT Gandhinagar,
		Near Village Palaj, Gandhinagar - 382355, India}
	\email{samrutiya@iitgn.ac.in}
	\author[S. Das]{Sandipan~Das}
	\address{Department of Mathematics, IIT Gandhinagar,
		Near Village Palaj, Gandhinagar - 382355, India}
	\email{sandipan.das@iitgn.ac.in}
	
   \thanks{The research work of Sanjay Amrutiya is financially supported by the SERB-DST under project no. CRG/2023/000477. The research work of Sandipan Das is financially supported by CSIR fellowship under the scheme 09/1031(22841)/2025-EMR-I}
   
   \subjclass[2020]{14H60, 33C80, 53C07}
   
   \keywords{parabolic $\mathrm{SL}(r,\mathbb{C})$-opers, equivariant $\mathrm{SL}(r,\mathbb{C})$-opers, jet bundle, differential operator, anti-holomorphic involution, real slice}

   \begin{abstract}
   	Let $X$ be a Riemann surface equipped with an anti-holomorphic involution $\sigma_X$. We show that this induces a natural anti-holomorphic involution on the space of parabolic $\mathrm{SL}(r,\mathbb{C})$-opers. The fixed-point locus of this involution is defined as the \emph{real slice}. We further study the induced involutions on different descriptions of parabolic $\mathrm{SL}(r,\mathbb{C})$-opers, in particular differential operators, and prove that these involutions coincide.
   \end{abstract}


	\maketitle
	
	\section{Introduction}
    
    Parabolic $\mathrm{SL}(r,\mathbb{C})$-opers were introduced in \cite{biswas2020parabolic, biswas2022infinitesimal} as a natural generalization of classical opers \cite{beilinson2005opers} to the parabolic setting. As in the holomorphic case, there is a correspondence between parabolic $\mathrm{SL}(r,\mathbb{C})$-opers and differential operators with principal symbol $1$ and vanishing subprincipal symbol, as described in \cite{biswas2023parabolic}.
    
    When a Riemann surface $X$ is equipped with an anti-holomorphic involution $\sigma_X$ (real curve), this structure induces an involution on the moduli space of $\mathrm{SL}(r,\mathbb{C})$-opers. The fixed-point locus of this involution is called the \emph{real slice}, and has been studied in \cite{Biswas2022RealSO}. In this paper, we extend this framework to the parabolic setting.
    
    In Section~2, we revisit the bijective correspondence between parabolic $\mathrm{SL}(r,\mathbb{C})$-opers and differential operators between suitable parabolic vector bundles, characterized by having principal symbol $1$ and vanishing subprincipal symbol.
    
    In Section~3, we show that the existence of a real (respectively, quaternionic) theta characteristic on a real curve induces an anti-holomorphic involution on the space of parabolic $\mathrm{SL}(r,\mathbb{C})$-connections (Proposition~\ref{prop:2.2}), as well as on their automorphism group (Proposition~\ref{prop:2.4}). Furthermore, the natural action of the automorphism group on the space of parabolic $\mathrm{SL}(r,\mathbb{C})$-connections is equivariant with respect to these involutions. Consequently, this induces an involution on the space of parabolic $\mathrm{SL}(r,\mathbb{C})$-opers (Proposition~\ref{prop:2.6}).
    
    We also construct an involution on the space of differential operators
    \[
    H^0\bigl(X, \mathrm{Diff}^r_X(\mathcal{L}^{1-r}_*, \mathcal{L}^{r+1}_*)\bigr)
    \]
    induced by a real (respectively, quaternionic) theta characteristic (Corollary~\ref{corollary 2.12.1}). Our main result establishes that these two involutions coincide on the space of parabolic $\mathrm{SL}(r,\mathbb{C})$-opers.
	\section{Preliminaries}
	Throughout the paper, let $X$ be a Riemann surface endowed with an anti-holomorphic involution $\sigma_X$. Fix $S=\{x_1,x_2,\cdots,x_n\}$ be a finite set of distinct points on $X$. Sometimes, the reduced effective divisor $x_1+x_2+ \cdots+x_n$ has also been denoted by $S$.
	
	\subsection{Parabolic Vector Bundle and Parabolic Connection}
	
	A \textit{parabolic vector bundle} is a triplet $(V,\{V_{i,j}\}, \{\alpha_{i,j}\})$, where 
		\begin{itemize}
			\item $V$ is a holomorphic vector bundle of rank $n$,
			\item For each $x_i \in S$, $V_{i,j}$ is a filtration of linear subspaces of $V_{x_i}$:
			\[ V_{x_i} = V_{i,1} \supset V_{i,2} \supset ....\supset V_{i,l_i} \supset V_{i,l_{i+1}} = 0, \tag{1.1} \]
			\item $\alpha_{i,j}$ is a finite sequence of positive real numbers (parabolic weights) corresponding to the subspace $V_{i,j}$, satisfying
			\[ 0 \leq\alpha_{i,1} < \alpha_{i,2} <....< \alpha_{i,l_i}< 1 \tag{1.2}. \]
		\end{itemize}	 
	
	Now, we shall define a parabolic connection on a parabolic vector bundle.
	A \textit{logarithmic connection} on the holomorphic vector bundle $V$, singular over $S$, is a $\mathbb{C}$-linear sheaf homomorphism   \[ D: V \longrightarrow V \otimes K_X(S)  \] satisfying the Leibniz rule:
		\[D(fs) = f D(s) + s \otimes df \tag{1.3}\]

	Let $x_i \in S$ and $D$ be a logarithmic connection on $V$.
	\begin{equation} \label{1.4}
		V \xrightarrow{D} V \otimes K_X(S) \longrightarrow (V \otimes K_X(S))_{x_i} \xrightarrow{\sim} V_{x_i}. \tag{1.4}
	\end{equation}    
      $(V \otimes K_X(S))_{x_i}$ is isomorphic to $V_{x_i}$ by adjunction formula \cite{griffiths2014principles}.
	Hence, restricting the composition map at $x_i$ produces a $\mathbb{C}$-linear homomorphism of vector space:
		\[ \operatorname{Res}(D, x_i) : V_{x_i} \longrightarrow V_{x_i},\tag{1.5}\]
	which is called the \textit{residue} of the logarithmic connection $D$ at $x_i$.
	
	A \textit{parabolic connection} on $V_*$ is a logarithmic connection $D$ on $V$, singular over $S$, such that:
		\begin{itemize}
			\item $\operatorname{Res}(D, x_i)(V_{i,j}) \subseteq V_{i,j}$ for all $1 \leq j \leq l_i$, $1 \leq i \leq n$, and
			\item the endomorphism of $V_{i,j}/V_{i,j+1}$ induced by $\operatorname{Res}(D, x_i)$ coincides with multiplication by the parabolic weight $\alpha_{i,j}$ for all $1 \leq j \leq l_i$, $1 \leq i \leq n$.
		\end{itemize}
	
	\begin{proposition}\label{prop:1}
		Suppose $D$ be a parabolic connection on a parabolic vector bundle $V_*$, then $\sigma_X^*\overline{D}$ is a parabolic connection on $\sigma_X^*\overline{V_*}$.
	\end{proposition}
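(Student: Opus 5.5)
The plan is to work in local holomorphic frames and coordinates: describe $\sigma_X^*\overline{V_*}$ and $\sigma_X^*\overline{D}$ explicitly, and then check the three defining conditions — logarithmic connection, preservation of the flags, and residues acting by the weights — one at a time.

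\emph{Step 1: the bundle and its parabolic structure.} The holomorphic bundle $\sigma_X^*\overline{V}$ has as local sections over $U$ the maps $x\mapsto \overline{s(\sigma_X(x))}$, where $s$ is a holomorphic section of $V$ over $\sigma_X(U)$. These are holomorphic because $\sigma_X$ is anti-holomorphic; concretely, if $s$ has holomorphic coefficients $g(z)$ in a frame of $V$, the new section has coefficients $\overline{g(\sigma_X(x))}$. The parabolic points of $\sigma_X^*\overline{V_*}$ are $\sigma_X(S)$, which need not equal $S$ (the statement implicitly uses the divisor $\sigma_X(S)$). At $\sigma_X(x_i)$ the fibre is $\overline{V_{x_i}}$. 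I give it the conjugate flag $\overline{V_{i,j}}$ and the same weights $\alpha_{i,j}$, so conditions (1.1) and (1.2) are automatic.

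\emph{Step 2: the logarithmic connection.} Define
\[(\sigma_X^*\overline{D})(\tilde s):=\overline{\sigma_X^*(D s)},\qquad \tilde s=\overline{\sigma_X^* s}.\]
The key point is that conjugating and then pulling back by an anti-holomorphic map sends $(1,0)$-forms to $(1,0)$-forms. If $z$ is a holomorphic coordinate near $x_i$ vanishing at $x_i$, then $w:=\overline{z\circ\sigma_X}$ is a holomorphic coordinate near $\sigma_X(x_i)$ vanishing there, and $\overline{\sigma_X^*(dz/z)}=dw/w$. Consequently
\[\overline{\sigma_X^*(K_X(S))}\cong K_X(\sigma_X(S)),\]
so $\sigma_X^*\overline{D}$ is a $\mathbb{C}$-linear map $\sigma_X^*\overline{V}\to\sigma_X^*\overline{V}\otimes K_X(\sigma_X(S))$. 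For the Leibniz rule (1.3), take a holomorphic function $f$ and write $f\tilde s=\overline{\sigma_X^*(h s)}$ with $h=\overline{f\circ\sigma_X}$. Since $\overline{\sigma_X^*(dh)}=df$, applying (1.3) for $D$ gives
\[(\sigma_X^*\overline{D})(f\tilde s)=f\,(\sigma_X^*\overline{D})(\tilde s)+\tilde s\otimes df.\]
In a local frame this is just the statement that the connection matrix $A(z)\,dz/z$ becomes $\overline{A(\sigma_X(x))}\,dw/w$.

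\emph{Step 3: residues, the main obstacle.} The delicate point is the residue, because the identification in (1.4) uses the adjunction isomorphism $(K_X(S))_{x_i}\cong\mathbb{C}$, given by $dz/z\mapsto 1$. One must check that this isomorphism is compatible with conjugation without introducing a sign or a factor. The coordinate computation settles this. From $\overline{\sigma_X^*(dz/z)}=dw/w$, and because the canonical generator at $\sigma_X(x_i)$ is $dw/w$, we obtain
\[\operatorname{Res}(\sigma_X^*\overline{D},\sigma_X(x_i))=\overline{\operatorname{Res}(D,x_i)}:\overline{V_{x_i}}\to\overline{V_{x_i}}.\]
This endomorphism is conjugate to the original residue as a semilinear transport. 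It therefore preserves each $\overline{V_{i,j}}$, and on $\overline{V_{i,j}}/\overline{V_{i,j+1}}$ it acts by $\overline{\alpha_{i,j}}=\alpha_{i,j}$, since the weights are real. This proves both parabolic conditions, so $\sigma_X^*\overline{D}$ is a parabolic connection on $\sigma_X^*\overline{V_*}$.
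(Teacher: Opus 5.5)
Your proof is correct and follows the same route as the paper. Both define $\sigma_X^*\overline{D}$ by $\sigma_X^*\overline{D}(\overline{\sigma_X^*s}):=\overline{\sigma_X^*(Ds)}$, identify its residue with the conjugate of the residue of $D$, and conclude that the conjugated flags are preserved. You are more complete than the paper in three respects: you check the logarithmic and Leibniz properties via $\overline{\sigma_X^*(dz/z)}=dw/w$, you verify the weight condition on graded pieces using $\overline{\alpha_{i,j}}=\alpha_{i,j}$ (the paper stops at flag preservation), and you place the parabolic points at $\sigma_X(S)$ instead of tacitly assuming $\sigma_X(S)=S$.
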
  
	
	\begin{proof}
		Define the connection $\sigma_X^*\overline{D}$ by \[ \sigma_X^*\overline{D}(\sigma_X^*\overline{s}) := \sigma_X^*\overline{D(s)}. \] for any local section $s$ of $V$. Since $D$ is a logarithmic connection on $\sigma_X^*\overline{V_*}$ singular over $S$, it follows that $\sigma_X^*\overline{D}$ is also logarithmic with singular over $S$.
		
		\medskip
		
		Let $x_i \in S$ be a parabolic point. Suppose the parabolic filtration of $V_*$ at $\sigma_X(x_i)$ is \[ V_{\sigma_X(x_i),1} \supset V_{\sigma_X(x_i),2} \supset \cdots \supset V_{\sigma_X(x_i),l_i} \supset 0\] with parabolic weights \[\alpha_{\sigma_X(x_i),1} < \alpha_{\sigma_X(x_i),2} < \cdots <\alpha_{\sigma_X(x_i),l_i}<1. \] 
		
		The fiber of $\sigma_X^*\overline{V}$ at $x_i$ is \[ (\sigma_X^*\overline{V})_{x_i}=\overline{V}_{\sigma_X(x_i)}.\] Now the filtration of $\sigma_X^*\overline{V}$ at $x_i$ is: \[\overline{V}_{\sigma_X(x_i),1} \supset \overline{V}_{\sigma_X(x_i),2} \supset \cdots \supset \overline{V}_{\sigma_X(x_i),l_i} \supset 0\] with the same parabolic weights $\alpha_{\sigma_X(x_i),j}$. 
		
		\medskip
		
		It is easy to see that residue of $\sigma_X^*\overline{D}$ at $x_i$ satisfies \[\mathrm{Res}(\sigma_X^*\overline{D},x_i)=\overline{\mathrm{Res}(D,\sigma_X(x_i))}.\]  
		
		Since $D$ is a parabolic connection, $\mathrm{Res}(D,\sigma_X(x_i))$ from 
		${V}_{\sigma_X(x_i)}$ to ${V}_{\sigma_X(x_i)}$ preserves filtration. Hence the map \[\mathrm{Res}(\sigma_X^*\overline{D},x_i): (\sigma_X^*\overline{V})_{x_i}=\overline{V}_{\sigma_X(x_i)} \rightarrow (\sigma_X^*\overline{V})_{x_i}=\overline{V}_{\sigma_X(x_i)}\] preserves filtration. This completes the proof.
	\end{proof}
	
	\subsection{Parabolic Gunning Bundle}(\cite{biswas2022infinitesimal}, p. 36; \cite{biswas2023parabolic}, Theorem 2.2., p. 5)
	Fix a theta characteristic $\mathcal{L}$ on $X$ such that $\mathcal{L}^{\otimes 2} \simeq K_X$.
	
	$H^1(X,\mathrm{Hom}(\mathcal{L}^*,\mathcal{L}))= H^1(X,K_X) = H^0(X,\mathcal{O}_X)^* \text{(Serre Duality)} = \mathbb{C}.$ 
	
	Choose $1 \in \mathbb{C}$. Hence $1$ corresponds to some non-trivial extention $\widetilde{E}$ of $\mathcal{L}^{*}$ by $\mathcal{L}$. We have a short exact sequence:
	\begin{align} \label{1.6}
		0 \longrightarrow \mathcal{L} \longrightarrow \widetilde{E} \xrightarrow{p_0} \mathcal{L}^* \longrightarrow 0 \tag{1.6}
	\end{align}
	The sub-sheaf $\mathcal{L}^*(-S) \subset \mathcal{L}^*$. Take $E=p_0^{-1}(\mathcal{L}^*(-S)) \subset \widetilde{E}$. Hence $E$ fits the short exact sequence: 
	\begin{equation} \label{1.7}
		0 \longrightarrow \mathcal{L} \longrightarrow E \xrightarrow{p} \mathcal{L}^*(-S) \longrightarrow 0 \tag{1.7}
	\end{equation}
	where $p$ is the restriction of $p_0$ on $E$.  
	
	$E_*$ be the corresponding parabolic vector bundle with filtration :
	\begin{equation} \label{1.8}
		0 \subset \mathcal{L}^*(-S)_{x_i} \subset E_{x_i} \tag{1.8}
	\end{equation}  
	with parabolic weights \[ 1 > \frac{c_{i}+1}{2c_{i}+1} > \frac{c_{i}}{2c_{i}+1} \] where $c_{i} \in \mathbb{N}$. \cite{biswas2023parabolic}
	
	\subsection{Parabolic $\mathrm{SL}(r,\mathbb{C})$-Opers}		
	A parabolic $\mathrm{SL}(r,\mathbb{C})$ connection is a parabolic connection $\nabla_*$ on $\mathrm{sym}^{r-1}(E_*)$ such that the induced parabolic connection on det$(\mathrm{sym}^{r-1}(E_*)) = \mathcal{O}_X$ is the trivial connection.
    Two parabolic $\mathrm{SL}(r,\mathbb{C})$ connections are related iff they differ by an element of parabolic automorphism of parabolic bundle \( \mathrm{sym}^{r-1}(E_*). \) 
	
\textbf{Definition:} A \emph{parabolic $\mathrm{SL}(r,\mathbb{C})$-oper} is an equivalence class of parabolic $\mathrm{SL}(r,\mathbb{C})$ connections on the parabolic vector bundle $\mathrm{sym}^{r-1}(E_*)$.

	\subsection{Differential Operator and Symbol}
     Our goal is to describe the correspondence between parabolic $\mathrm{SL}(r,\mathbb{C})$-opers and differential operators between parabolic vector bundles with principal symbol $1$ and vanishing subprincipal symbol. For this, we begin by recalling the notions of jet bundles and differential operators.
	
	For any integer $k \geq 0$, \textit{$k$-th jet bundle} of $V$ is defined by
		\[  J^k(V):= p_{1*} \left( \frac{p_2^* V}{(p_2^* V) \otimes \mathcal{O}_{X \times X}(- (k+1)\Delta)} \right) \tag{1.9} \]
		
	where $p_i: X \times X \rightarrow X$ is the projection onto $i$-th component, $i=1,2$ and $\Delta$ is the reduced diagonal divisor defined by
	\[ \Delta:= \{(x,x) \mid x \in X\} .\]

   Let $V$ and $W$ be holomorphic vector bundles over $X$. The sheaf of \emph{holomorphic differential operators} of order $k$ from $V$ to $W$ is defined as
   \[
   \mathrm{Diff}^k(V, W) := \mathrm{Hom}(J^k(V), W) \cong W \otimes J^k(V)^*.
   \tag{1.10}
   \]
	
    Let $K_X$ denote the canonical bundle of the Riemann surface $X$. There is a natural short exact sequence of vector bundles (see \cite{biswas2003coupled}):
	\[
	0 \longrightarrow V \otimes K_X^{k} \longrightarrow J^{k}(V) \xrightarrow{q_V^k} J^{k-1}(V) \longrightarrow 0,
	\tag{1.11}
	\]
	where $q_V^k$ is the natural projection map given by restricting $k$-jets to $(k-1)$-jets at each point of $X$.
	
	The inclusion $V \otimes K_X^k \hookrightarrow J^k(V)$ induces a surjective homomorphism
	\[
	\mathrm{Diff}^k(V, W) \longrightarrow \mathrm{Hom}(V \otimes K_X^k, W)
	\cong \mathrm{Hom}(V, W) \otimes T_X^{\otimes k},
	\tag{1.12}
	\]
	which is called the \emph{symbol map}.
	
	Furthermore, every vector bundle $V$ satisfies the following short exact sequence (see
    \cite{MR2036690}): \[ 0 \longrightarrow J^k(V) \longrightarrow J^{1}(J^{k-1}(V)) \longrightarrow J^{k-2}(V) \otimes K_X \longrightarrow 0 \tag{1.13} \]

	\subsection{Orbifold Bundle}
	 Let $Y$ be a Riemann surface, and let $\Gamma$ be a finite group acting holomorphically and effectively on $Y$. Then the quotient $X := \Gamma \backslash Y$ admits a natural structure of a Riemann surface. Moreover, the projection map $p: Y \to X$ is a ramified Galois covering with Galois group $\mathrm{Aut}(Y/X) = \Gamma$.
         
      \textbf{Definition:}  
         	An \emph{orbifold bundle} (or a \emph{$\Gamma$-equivriant bundle}) is a holomorphic vector bundle $V$ on $Y$ endowed with a lift of action $\Gamma$ on $Y$ to $V$ satisfying the following:
         	\begin{itemize}
         		\item the projection $p: V \to Y$ is $\Gamma$-equivariant, and
         		\item For each $y \in Y$ and $g \in G$, the fiber map $V_y \rightarrow V_{gy}$ defined by $v \mapsto gv$ is a linear isomorphism. 
         	\end{itemize} 
        
	\begin{lemma}\label{lemma 1.15}
		Let $G$ be a group acting holomorphically and effectively on a Riemann surface $X$ endowed with an anti-holomorphic involution $\sigma_X$. Let $V$ be an equivariant bundle with an equivarint connection $D$
		, then $\sigma^*_X\overline{D}$ is equivariant connection on $\sigma^*_X\overline{V}$.
	\end{lemma}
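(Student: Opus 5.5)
We first make $\sigma^*_X\overline{V}$ into a $G$-equivariant bundle, then check that $\sigma^*_X\overline{D}$, defined exactly as in the proof of Proposition~\ref{prop:1}, commutes with the new action. The key point is that $G$ must act on $\sigma^*_X\overline{V}$ by holomorphic maps covering the action on $X$. So we use the conjugated action $g\mapsto \sigma_X\circ g\circ\sigma_X$, which is holomorphic. We therefore assume, as is implicit in the statement, that $G$ is normalized by $\sigma_X$; the simplest case is when $\sigma_X$ commutes with every $g\in G$. Write $g':=\sigma_X g\sigma_X$, so that $g\mapsto g'$ is an automorphism of $G$.

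\textbf{Step 1 (the action).} For $x\in X$ we have $(\sigma_X^*\overline{V})_x=\overline{V}_{\sigma_X(x)}$. For $g\in G$, define the fibre map
\[
(\sigma_X^*\overline{V})_x=\overline{V}_{\sigma_X(x)}\longrightarrow \overline{V}_{g'\sigma_X(x)}=(\sigma_X^*\overline{V})_{\sigma_X g' \sigma_X(x)},\qquad \bar v\mapsto \overline{g' v}.
\]
Here $\sigma_X g'\sigma_X=g$, so this map covers $g$. It is $\mathbb{C}$-linear on $\overline{V}$ because $v\mapsto g'v$ is $\mathbb{C}$-linear on $V$. It is holomorphic because the composition of two anti-holomorphic operations, conjugation and $\sigma_X$, is holomorphic; one checks this in local trivializations in which the $G$-action is given by holomorphic transition matrices $A(z)$, which become $\overline{A(\sigma_X(\bar{z}))}$-type holomorphic matrices. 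The group law and the conditions in the definition of orbifold bundle follow from those for $V$, since $(gh)'=g'h'$.

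\textbf{Step 2 (the connection).} Recall $\sigma_X^*\overline{D}(\sigma_X^*\overline{s})=\sigma_X^*\overline{D(s)}$. By Proposition~\ref{prop:1}, or by the same Leibniz-rule computation, this is a holomorphic (logarithmic) connection. Equivariance of $D$ means $g^*D=D$ under the identification $g^*V\cong V$, that is, $D(g\cdot s)=g\cdot D(s)$ for the induced action on sections and on $K_X$-valued sections. For a local section $\sigma_X^*\bar s$ we then compute
\[
g\cdot(\sigma_X^*\overline{s})=\sigma_X^*\overline{g'\cdot s},
\]
which holds by construction of the action in Step 1. Hence
\[
\sigma_X^*\overline{D}\bigl(g\cdot \sigma_X^*\bar s\bigr)=\sigma_X^*\overline{D(g'\cdot s)}=\sigma_X^*\overline{g'\cdot D(s)}=g\cdot \sigma_X^*\overline{D}(\sigma_X^*\bar s).
\]
Here the action of $g$ on $K_X$-valued sections is compatible because pullback of forms commutes with $\sigma_X^*$ and conjugation: $\sigma_X^*\overline{g'^*\omega}=g^*\sigma_X^*\bar\omega$, since $g'\sigma_X=\sigma_X g$. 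This gives equivariance.

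The main obstacle is Step 1: verifying that the conjugated action is holomorphic and well defined. This requires the compatibility $\sigma_X G\sigma_X=G$, and that is where the hypothesis on $\sigma_X$ and $G$ must enter. Once this is settled, Step 2 is a formal check identical in spirit to Proposition~\ref{prop:1}. If the paper intends $G$ to commute with $\sigma_X$, then $g'=g$ and all formulas simplify accordingly.
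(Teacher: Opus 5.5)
Your proof is correct. Step 2 is the same formal computation the paper does: carry the action and the connection through conjugation and $\sigma_X^*$, then use $D(g\cdot s)=g\cdot D(s)$.

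Where you differ is the action on $\sigma_X^*\overline{V}$. The paper sets $g\cdot\sigma_X^*\bar s:=\sigma_X^*(\overline{gs})$, with no twist and no hypothesis on $G$. Fibrewise this maps $\overline{V}_{\sigma_X(x)}$ to $\overline{V}_{g\sigma_X(x)}=(\sigma_X^*\overline{V})_{\sigma_X g\sigma_X(x)}$. So it covers $\sigma_X g\sigma_X$, not $g$, and the paper never addresses this. Read literally, the paper's argument proves equivariance for the conjugated holomorphic action $x\mapsto\sigma_X g\sigma_X(x)$, which works for arbitrary $G$. It proves equivariance for the original action only when $\sigma_X$ commutes with $G$.

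Your twist (your $g$ acts as the paper's $g'$) makes the lift cover the original action, at the cost of assuming $\sigma_X G\sigma_X=G$. That assumption is not a defect:
\begin{itemize}
\item It is exactly what the statement requires if ``equivariant'' refers to the given action.
\item It holds automatically in the paper's application. If $\sigma_Y$ lifts $\sigma_X$ along the Galois cover $Y\to X$, then $\sigma_Y\gamma\sigma_Y$ is a deck transformation, so it lies in $\Gamma$.
\item It is the form needed later. For instance, $\widetilde{\sigma}^0_1\colon\mathcal{V}\to\sigma_Y^*\overline{\mathcal{V}}$ in \eqref{2.5} can be $\Gamma$-equivariant over $\mathrm{id}_Y$ only if both $\Gamma$-actions cover the same action on $Y$.
\end{itemize}

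If you want to match the statement with no extra hypothesis, let $G$ act on $X$ through $g\mapsto\sigma_X g\sigma_X$ and use $\bar v\mapsto\overline{gv}$. Your Step 2 then goes through with $g'$ replaced by $g$.
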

	
	\begin{proof} 
		Let $V$ is $G$-equivariant. Hence action of $G$ on $V$ induces an action on $\overline{V}$ :
		\begin{align}\label{(1.14)}
			G \times \overline{V} \longrightarrow \overline{V} \quad (g,\overline{w}) \longmapsto \overline{gw}. \tag{1.14}
		\end{align}
		Let $\overline{D}$ be the induced connection on $\overline{V}$ defined by \( \overline{D}(\overline{s}) := \overline{D(s)}. \)
		\begin{align*}
			\overline{D}(g\overline{w})= \overline{gD(w)}
			&= g\overline{D}(\overline{w}).
		\end{align*} 
		Hence $\overline{D}$ is $G$-equivariant.
		
		Similarly, action of $G$ on $V$ induces an action on $\sigma^*_X\overline{V}$. Define the action:
		\begin{align}\label{(1.15)}
			G \times \sigma^*_X\overline{V} \longrightarrow \sigma^*_X\overline{V} \quad (g,\sigma^*_X\overline{s}) \longmapsto \sigma^*_X(g\overline{s})  \tag{1.15}.
		\end{align}
		Let $\sigma^*_X\overline{D}$ be the induced connection on $\sigma^*_X\overline{V}$ defined by \( \sigma^*_X\overline{D}(\sigma^*_X\overline{s})= \sigma^*_X\overline{D}(\overline{s}).\)
		Now we have
		\begin{align*}
			\sigma_X^*\overline{D}(g \sigma_X^*\overline{s})
			= \sigma_X^*\overline{D}(\sigma_X^*(g\overline{s})) 
			= \sigma_X^*(\overline{D}(g\overline{s})) 
			= \sigma_X^*(g\,\overline{D}(\overline{s})) 
			= g\,\sigma_X^*\overline{D}(\sigma_X^*\overline{s})
		\end{align*}
		This shows that $\sigma_X^*\overline{D}$ is $\Gamma$-equivariant.
	\end{proof}
	
	\begin{lemma}\label{lemma 1.16} Let $G$ be a group acting holomorphically and effectively on a Riemann surface $X$ endowed with an anti-holomorphic involution $\sigma_X$. \(\phi: V \longrightarrow V \) is an equivariant morphism between equivariant vector bundle $V$, then \( \sigma^*_X\overline{\phi}: \sigma^*_X\overline{V} \longrightarrow \sigma^*_X\overline{V} \) is equivariant. 
	\end{lemma}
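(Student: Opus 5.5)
The plan mirrors the proof of Lemma~\ref{lemma 1.15}, with the equivariant morphism $\phi$ playing the role of the connection $D$. I will first equip $\overline{V}$ with the $G$-action $(g,\overline{w})\mapsto \overline{gw}$ from \eqref{(1.14)}, and then $\sigma^*_X\overline{V}$ with the action \eqref{(1.15)}, namely $g\cdot\sigma^*_X\overline{s} := \sigma^*_X(g\overline{s})$.

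Next I define the conjugate morphism. Set $\overline{\phi}:\overline{V}\to\overline{V}$ by $\overline{\phi}(\overline{w}) := \overline{\phi(w)}$. This is well defined and fiberwise $\mathbb{C}$-linear for the conjugate complex structure, because $\phi$ is fiberwise $\mathbb{C}$-linear and conjugation is applied to both source and target. Then set
\[
\sigma^*_X\overline{\phi}\,(\sigma^*_X\overline{s}) := \sigma^*_X\overline{\phi(s)}
\]
for any local section $s$ of $V$. Holomorphicity of $\sigma^*_X\overline{\phi}$ follows because $\sigma_X$ is anti-holomorphic: the two conjugations cancel in local coordinates. This is the same observation that makes $\sigma^*_X\overline{V}$ a holomorphic bundle.

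The equivariance is then a chain of equalities, exactly as in Lemma~\ref{lemma 1.15}. At the level of $\overline{V}$,
\[
\overline{\phi}(g\overline{w}) = \overline{\phi(gw)} = \overline{g\,\phi(w)} = g\,\overline{\phi}(\overline{w}),
\]
using the $G$-equivariance of $\phi$. Pulling back by $\sigma_X$,
\[
\sigma^*_X\overline{\phi}\bigl(g\,\sigma^*_X\overline{s}\bigr) = \sigma^*_X\overline{\phi}\bigl(\sigma^*_X(g\overline{s})\bigr) = \sigma^*_X\bigl(\overline{\phi}(g\overline{s})\bigr) = \sigma^*_X\bigl(g\,\overline{\phi}(\overline{s})\bigr) = g\,\sigma^*_X\overline{\phi}(\sigma^*_X\overline{s}).
\]

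The only point needing care, which I expect to be the main obstacle, is checking that the action \eqref{(1.15)} covers a genuine action on the base. The fiber of $\sigma^*_X\overline{V}$ at $x$ is $\overline{V}_{\sigma_X(x)}$, so $g$ moves it to the fiber over $\sigma_X^{-1}g\sigma_X(x)$. The lifted action should therefore be read as the action of $G$ on $X$ conjugated by $\sigma_X$. This is harmless because $\sigma_X g\sigma_X$ is again holomorphic, and the paper implicitly uses this convention in Lemma~\ref{lemma 1.15}. Since the same convention is applied to source and target of $\sigma^*_X\overline{\phi}$, the computation above is unaffected, and equivariance follows.
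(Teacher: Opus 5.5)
Your proposal is correct and follows the paper's proof essentially verbatim. The paper likewise transports the $G$-action to $\sigma^*_X\overline{V}$ via (1.14) and (1.15), sets $\sigma^*_X\overline{\phi}(\sigma^*_X\overline{s}) := \sigma^*_X\overline{\phi(s)}$, and runs the same chain of equalities using the $G$-equivariance of $\phi$; you only split it into a $\overline{V}$-level step and a pull-back step. Your extra remarks on holomorphicity, and on the lifted action covering the $\sigma_X$-conjugated action $x\mapsto \sigma_X g\sigma_X(x)$ on the base, are accurate points the paper leaves implicit and do not change the argument.
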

	
	\begin{proof}
		Let $V$ is $G$-equivariant. Hence action of $G$ on $V$ induces an action on $\sigma^*_X\overline{V}$ as in (\ref{(1.14)}) and (\ref{(1.15)}) of Lemma \ref{lemma 1.15}.
		
		Let $\sigma^*_X\overline{\phi}$ be the induced morphism on $\sigma^*_X\overline{V}$ defined by \( \sigma^*_X\overline{\phi}(\sigma^*_X\overline{s}) := \sigma^*_X\overline{\phi(s)}. \)
		
		Then
		\begin{align*}
			\sigma_X^*\overline{\phi}(g \sigma_X^*\overline{s})
			= \sigma_X^*\overline{\phi}(\sigma_X^*(g\overline{s})) 
			= \sigma_X^*\bigl(\overline{\phi(g s)}\bigr) 
			= \sigma_X^*\bigl(\overline{g \phi(s)}\bigr) 
			= \sigma_X^*(g\,\overline{\phi(s)}) \\
			= g\,\sigma_X^*\overline{\phi}(\sigma_X^*\overline{s})
		\end{align*}
		where we used the $\Gamma$-equivariance of $\phi$. This proves the claim.
	\end{proof}
	
	\begin{corollary}\label{corollary 1.17} If \(\phi: V \longrightarrow W \) is an equivariant morphism, then \( \sigma^*_X\overline{\phi}: \sigma^*_X\overline{V} \longrightarrow \sigma^*_X\overline{W} \) is equivariant.         
	\end{corollary}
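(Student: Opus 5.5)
The plan is to repeat the argument of Lemma~\ref{lemma 1.16}, now with two possibly different equivariant bundles. Let $V$ and $W$ be $G$-equivariant holomorphic bundles on $X$, and let $\phi: V \to W$ be $G$-equivariant, so that $\phi(gs) = g\phi(s)$ for every local section $s$ of $V$ and every $g \in G$. First I equip both $\sigma_X^*\overline{V}$ and $\sigma_X^*\overline{W}$ with the $G$-actions from (\ref{(1.14)}) and (\ref{(1.15)}): the action passes to the conjugate bundle by $g\overline{w} := \overline{gw}$, and then to the pullback by $g\,\sigma_X^*\overline{s} := \sigma_X^*(g\overline{s})$. These constructions are carried out separately on each bundle and do not use that source and target coincide.

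Next I define $\sigma_X^*\overline{\phi}: \sigma_X^*\overline{V} \to \sigma_X^*\overline{W}$ by $\sigma_X^*\overline{\phi}(\sigma_X^*\overline{s}) := \sigma_X^*\overline{\phi(s)}$. Fiberwise this is the map $\overline{V}_{\sigma_X(x)} \to \overline{W}_{\sigma_X(x)}$ induced by $\phi$, so it is a well-defined bundle map. Then I check equivariance through the same chain of equalities as in Lemma~\ref{lemma 1.16}:
\begin{align*}
\sigma_X^*\overline{\phi}(g\,\sigma_X^*\overline{s})
= \sigma_X^*\overline{\phi}\bigl(\sigma_X^*(\overline{gs})\bigr)
= \sigma_X^*\overline{\phi(gs)}
= \sigma_X^*\overline{g\phi(s)}
= \sigma_X^*\bigl(g\,\overline{\phi(s)}\bigr)
= g\,\sigma_X^*\overline{\phi}(\sigma_X^*\overline{s}).
\end{align*}
The only input is the equivariance of $\phi$, used in the third equality. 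The first, fourth and fifth equalities are the definitions of the induced actions on the source and target bundles respectively.

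There is no real obstacle. The one point that needs care is to track which bundle each action lives on. In the first two equalities the action is the one on $\sigma_X^*\overline{V}$, and in the last two it is the one on $\sigma_X^*\overline{W}$. In Lemma~\ref{lemma 1.16} these two actions were the same, and here they are not. A second point is that, to make sense of the actions on the pullbacks, one should assume, as the paper implicitly does, that $\sigma_X$ is compatible with the $G$-action on $X$, so that the lifted action covers the correct action on $X$. Once that is granted, the corollary follows immediately from the computation in Lemma~\ref{lemma 1.16}.
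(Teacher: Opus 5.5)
Your proposal is correct and is essentially the paper's own argument. The paper gives no separate proof of this corollary and treats it as immediate from the computation in Lemma~\ref{lemma 1.16}; your chain of equalities is exactly that computation, with the last two steps using the induced action on $\sigma_X^*\overline{W}$. Your caveat is also a fair point that the paper leaves unstated: without compatibility of $\sigma_X$ with the $G$-action, the induced action on $\sigma_X^*\overline{V}$ covers the conjugated action $x \mapsto \sigma_X g \sigma_X(x)$ rather than the original one.
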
  
	
	\subsection{Equivariant $\mathrm{SL}(r,\mathbb{C})$-opers:}The parabolic Gunning bundle $E_*$ is a parabolic vector bundle on $X$ with parabolic structure on $S$. Moreover, for each $x \in S$, the parabolic weights are integral multiple of $\frac{1}{2c_i+1}$, where $c_i \in \mathbb{N}$. Hence there is ramified Galois covering $\phi: Y \longrightarrow X$ such that:
	\begin{itemize}
		\item $\phi$ is unramified over $X \setminus S$,
		\item for each $y \in \phi^{-1}(x_i)$, the order of the ramification of $\phi$ at $y$ is $2c_i+1$, where $x_i \in S$.
	\end{itemize}
	Such a ramified covering $\phi$ exists \cite{MR933557}.
	Under this assumption there is an equivalence between the category of parabolic vector bundle on $X$ whose weights of flag over each parabolic point is $\frac{k}{2c_i+1}$, where $0 \leq k < 2c_i+1$ and the category of orbifold bundle on $Y$ has been discussed in \cite{mehta1980moduli}. Hence in the orbifold category we shall get an exact sequence analogous to $(\ref{1.7})$:
	 \begin{align*}
	   0 \longrightarrow \mathbb{L} \longrightarrow \mathcal{V} \longrightarrow \mathbb{L}^* \longrightarrow 0
	 \end{align*}
	 where the orbifold bundles $\mathcal{V}$ and $\mathbb{L}$ correspond to $E_*$ and  $\mathcal{L}$ respectively.
	 Hence we can define similarly $\mathrm{SL}(r,\mathbb{C})$-opers in the equivariant setup.
	
	A $\Gamma$-equivariant $\mathrm{SL}(r,\mathbb{C})$-connection is an equivariant connection on $\mathrm{sym}^{r-1}{\mathcal{V}}$ such that the induced connection on $\mathrm{det} ({\mathrm{sym}^{r-1}}(\mathcal{V})) = \mathcal{O}_Y$ is the trivial connection. Moreover, we can define an equivalance relation on the equivariant $\mathrm{SL}(r,\mathbb{C})$-connections iff they differ by a $\Gamma$-equiavariant automorphism of $\mathrm{sym}^{r-1}(\mathcal{V})$. Hence we have the notion of equivariant $\mathrm{SL}(r,\mathbb{C})$-oper.
	
	\textbf{Definition:}
			A \emph{$\Gamma$-equivariant $\mathrm{SL}(r,\mathbb{C})$-oper} is an equivalence class of equivariant $\mathrm{SL}(r,\mathbb{C})$-connection on $\mathrm{sym}^{r-1}(\mathcal{V})$.

	\subsection{Correspondence between parabolic $\mathrm{SL}(r,\mathbb{C})$-opers and parabolic differential operator:}Let $\mathbb{V}$ and $\mathbb{W}$ be two orbifold bundles on $Y$ corresponding to the parabolic vector bundles $V_*$ and $W_*$ on $X$, respectively. Then there is a natural identification described in \cite[Proposition 5.2.]{biswas2023parabolic}:
	\begin{align} \label{1.16}
		H^0\big(X, \mathrm{Diff}^k(V_*, W_*)\big):= H^0\big(Y, \mathrm{Diff}^k(\mathbb{V}, \mathbb{W})\big)^{\Gamma} = H^0(Y, \mathrm{Hom}\big(J^k(\mathbb{V}),\mathbb{W})\big)^\Gamma. \tag{1.16}
	\end{align}
	
	\begin{theorem}\label{thm 1.18}
		There is a canonical bijection between the space $\mathrm{SL}^\Gamma_Y(r)$ and \[
		\widetilde{\mathcal{U}}
		=\bigl\{\widetilde{\delta}\in H^0(Y,\mathrm{Diff}^r_Y(\mathbb{L}^{1-r},\mathbb{L}^{1+r})^\Gamma)
		\mid \sigma_{\mathrm{prin}}(\widetilde{\delta})=1,\ \sigma_{\mathrm{sub}}(\widetilde{\delta})=0\bigr\},
		\]
		where $\sigma_{\mathrm{prin}}$ and $\sigma_{\mathrm{sub}}$ denote the principal and subprincipal symbols respectively.
	\end{theorem}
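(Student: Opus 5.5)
The plan is to reduce Theorem~\ref{thm 1.18} to the classical (non-equivariant) correspondence between $\mathrm{SL}(r,\mathbb{C})$-opers and differential operators on $Y$, and then check that every map involved is canonical and hence $\Gamma$-equivariant. The classical statement is the one of \cite{beilinson2005opers,biswas2023parabolic}, applied to the theta characteristic $\mathbb{L}$ on $Y$ (so $\mathbb{L}^{2}\cong K_Y$ as orbifold bundles). It gives a canonical bijection between $\mathrm{SL}(r,\mathbb{C})$-connections on $\mathrm{sym}^{r-1}(\mathcal{V})$ modulo automorphisms, and operators $\delta\in H^0(Y,\mathrm{Diff}^r_Y(\mathbb{L}^{1-r},\mathbb{L}^{1+r}))$ with $\sigma_{\mathrm{prin}}(\delta)=1$ and $\sigma_{\mathrm{sub}}(\delta)=0$. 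Restricting both sides to $\Gamma$-invariant objects then yields the theorem.

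The first step is to recall the construction of the correspondence and check its canonicity. The non-split extension $0\to\mathbb{L}\to\mathcal{V}\to\mathbb{L}^*\to 0$ identifies $\mathrm{sym}^{r-1}(\mathcal{V})$ canonically with the jet bundle $J^{r-1}(\mathbb{L}^{1-r})$. Using (1.11) and (1.13), a connection $D$ on $J^{r-1}(\mathbb{L}^{1-r})$ yields a splitting of $J^r(\mathbb{L}^{1-r})\hookrightarrow J^1(J^{r-1}(\mathbb{L}^{1-r}))\to J^{r-1}(\mathbb{L}^{1-r})$. Composing with the quotient onto $\mathbb{L}^{1-r}\otimes K_Y^r=\mathbb{L}^{1+r}$ gives an operator $\delta_D$ of order $r$ with principal symbol $1$. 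The condition that the induced connection on $\det=\mathcal{O}_Y$ is trivial corresponds to $\sigma_{\mathrm{sub}}(\delta_D)=0$. Conversely, $\delta$ recovers the oper as the connection whose flat sections are jets of solutions of $\delta$.

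All identifications here are natural in the data $(Y,\mathbb{L},\mathcal{V})$. So for $g\in\Gamma$ we get
\[
\delta_{g\cdot D}=g\cdot\delta_D ,
\]
where $\Gamma$ acts on connections via the equivariant structure, as in Lemma~\ref{lemma 1.15}, and on operators by conjugation. The symbol maps (1.12) are likewise $\Gamma$-equivariant, by Corollary~\ref{corollary 1.17} applied to the natural inclusion and projection maps. Hence $\widetilde{\mathcal{U}}$ is preserved by the action.

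The second step is to pass to invariants. If $D$ is $\Gamma$-equivariant, then $\delta_D$ is $\Gamma$-invariant. Conversely, an invariant $\delta$ produces a connection that is unique and canonically determined, so it is $\Gamma$-equivariant. It remains to see that the equivalence relations match. Equivalence classes of equivariant connections are taken modulo $\Gamma$-equivariant automorphisms of $\mathrm{sym}^{r-1}(\mathcal{V})$, and the classical map is already constant on orbits of all automorphisms. Thus we obtain a well-defined map $\mathrm{SL}^\Gamma_Y(r)\to\widetilde{\mathcal{U}}$, together with a map back.

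The main obstacle is injectivity at the level of classes. Two equivariant connections giving the same $\delta$ are classically conjugate by some automorphism $\phi$, and we need $\phi$ itself to be equivariant. I would handle this by using that the automorphism group of the Gunning-type bundle acting on opers acts freely. Then $g\phi g^{-1}$ also conjugates the two connections, so freeness forces $g\phi g^{-1}=\phi$. If freeness fails, the fallback is to take the normal form: each class contains a unique connection inducing the canonical filtration splitting. This normal form is canonical, hence $\Gamma$-equivariant, which gives injectivity directly.
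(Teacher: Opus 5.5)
The gap is in the step you yourself identify as the main obstacle. The claim that $\mathrm{Aut}(\mathrm{sym}^{r-1}\mathcal{V})$ acts freely on oper connections is false. Every constant $c\cdot\mathrm{id}$, $c\in\mathbb{C}^*$, is a $\Gamma$-equivariant automorphism, and $(c\,\mathrm{id}\otimes\mathrm{id}_{K_Y})^{-1}\circ D\circ c\,\mathrm{id}=D$ because $D$ is $\mathbb{C}$-linear. So every stabilizer contains $\mathbb{C}^*$, and knowing that $\phi$ and $g\phi g^{-1}$ both carry $D_1$ to $D_2$ only gives $\phi^{-1}g\phi g^{-1}\in\mathrm{Stab}(D_1)$. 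Suppose you also proved that this stabilizer is exactly $\mathbb{C}^*$, which needs irreducibility of the oper monodromy. You would still only get $g\phi g^{-1}=c(g)\phi$ for a character $c\colon\Gamma\to\mathbb{C}^*$. Killing $c$ needs a further argument, e.g.\ that $\phi$ preserves the line subbundle $\mathbb{L}^{r-1}$ for degree reasons and acts on it by a constant commuting with $g$. Your fallback is not an argument either. The filtration of $\mathrm{sym}^{r-1}\mathcal{V}$ has no canonical splitting. A normal form (a slice for the $\mathrm{Aut}$-action) requires a choice, such as a reference identification or a projective structure, and you would have to arrange that choice to be $\Gamma$-invariant.

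The root cause is a misstatement in your first step. The extension does not identify $\mathrm{sym}^{r-1}(\mathcal{V})$ with $J^{r-1}(\mathbb{L}^{1-r})$ independently of $D$. The isomorphism $\widetilde{\psi}_{r-1}$ in the paper is built from $D$ and $\psi$: extend a vector to a $D$-flat local section, apply $\psi$, and take the $(r-1)$-jet. If the identification were $D$-independent, $\widetilde{\delta}_D$ would determine $D$ itself. Then non-scalar automorphisms (e.g.\ $\mathrm{id}$ plus a nilpotent term from $H^0(Y,K_Y)$) would act trivially on connections, which they do not. Likewise, an arbitrary connection on $J^{r-1}(\mathbb{L}^{1-r})$ does not split $J^{r}\to J^{r-1}$; only the transported one does.

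This $D$-dependence is exactly what closes the gap. Let $D_1,D_2$ be $\Gamma$-equivariant with $\widetilde{\delta}_{D_1}=\widetilde{\delta}_{D_2}$. Then $\widetilde{\psi}^{D_1}_{r-1}$ and $\widetilde{\psi}^{D_2}_{r-1}$ carry $D_1$ and $D_2$ to the same connection on $J^{r-1}(\mathbb{L}^{1-r})$. This is the connection whose flat sections are the $(r-1)$-jets of local solutions of $\widetilde{\delta}_{D_1}$, as in (2.20). Hence $\phi=(\widetilde{\psi}^{D_2}_{r-1})^{-1}\circ\widetilde{\psi}^{D_1}_{r-1}$ intertwines $D_1$ and $D_2$. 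It is $\Gamma$-equivariant because each factor is, and no freeness or irreducibility is needed.

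Surjectivity has a similar issue. ``Canonically determined, hence equivariant'' applies to the connection $\kappa_2$ on $J^{r-1}(\mathbb{L}^{1-r})$, not to a representative on $\mathrm{sym}^{r-1}\mathcal{V}$. To transport $\kappa_2$ back you need a $\Gamma$-equivariant isomorphism $\mathrm{sym}^{r-1}\mathcal{V}\cong J^{r-1}(\mathbb{L}^{1-r})$, e.g.\ $\widetilde{\psi}^{D_0}_{r-1}$ for one equivariant oper $D_0$.

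For comparison, the paper runs both constructions directly in the equivariant category: the maps $\widetilde{\psi}_j$ in one direction, and the $\kappa_1,\kappa_2$ diagram chase in the other. It then cites Biswas for mutual inverseness and compatibility with the equivalence relation. Your reduction-plus-invariants route is a reasonable alternative and correctly isolates the class-level issue, but it only works with the argument above.
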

	
	\begin{proof}
		Suppose $D$ be a $\Gamma$-equivariant connection on $\mathrm{sym}^{r-1}\mathcal{V}$. We have a short exact sequence defined in $(\ref{1.7})$. By our assumption, $\mathbb{L}$ and $\mathcal{V}$ are the orbifold bundles corresponding to $\mathcal{L}_*$ and $E_*$, respectively (parabolic structure of $\mathcal{L}$ is induced from $E_*$). Hence we shall get an exact sequence analogous to $(\ref{1.7})$:
		\begin{align*}\label{1.17}
			0 \longrightarrow \mathbb{L} \longrightarrow \mathcal{V} \longrightarrow \mathbb{L}^* \longrightarrow 0 \tag{1.17}
		\end{align*}
		
		Now $\mathrm{sym}^{r-1}\mathbb{L}^*$ and $\mathbb{L}^{1-r}$ are isomorphic $\Gamma$-equivariant bundle as $\mathbb{L}^*$ is a line bundle. The isomorphism and (\ref{1.17}) induce a $\Gamma$-equivariant morphism 
		\begin{align}\label{(1.18)}
			\psi: \mathrm{sym}^{r-1}{\mathcal{V}} \longrightarrow \mathbb{L}^{1-r}. \tag{1.18}
		\end{align}
		Using $D$ and $\psi$, we can have a $\Gamma$-equivariant morphism\[ \widetilde{\psi}_j: \mathrm{sym}^{r-1}{\mathcal{V}} \rightarrow J^j(\mathbb{L}^{1-r}) \] defined in ($3.3$) of \cite{MR2036690}. Moreover, lemma $3.2$ of \cite{MR2036690}, says that \[ \widetilde{\psi}_{r-1}: \mathrm{sym}^{r-1}{\mathcal{V}} \rightarrow J^{r-1}(\mathbb{L}^{1-r}) \] is an isomorphism and ($3.5$) says that \[\widetilde{\psi}_{r} \circ \widetilde{\psi}_{r-1}^{-1}:J^{r-1}(\mathbb{L}^{1-r})\rightarrow J^{r}(\mathbb{L}^{1-r})\] splits the exact sequence 
		\begin{align}\label{(1.19)}
			0 \rightarrow \mathbb{L}^{1-r}\otimes K_Y^{r}=\mathbb{L}^{1+r} \rightarrow J^{r}(\mathbb{L}^{1-r}) \rightarrow J^{r-1}(\mathbb{L}^{1-r}) \rightarrow 0.
			\tag{1.19}
		\end{align}
		Hence the splitting of (\ref{(1.19)}), gives a $\Gamma$-equivariant differential operator of order $r$
		\[ \widetilde{\delta}_{D}: J^{r}(\mathbb{L}^{1-r}) \rightarrow \mathbb{L}^{1+r}. \] of principal symbol $1$. 
		
		Conversely, suppose we have a $\Gamma$-equivariant map \[ \widetilde{\delta}: J^r(\mathbb{L}^{1-r}) \longrightarrow \mathbb{L}^{1+r} = \mathbb{L}^{1-r}\otimes K_Y^r \] with principal symbol=$1$. 
		
		Now consider the following commutative diagram of $\Gamma$-equivariant  homomorphism of jet bundles:
		\[
		\xymatrix
		{ ~ 
			& 0 \ar[d] 
			& 0 \ar[d] \\
			0 \ar[r] 
			& \mathbb{L}^{1-r}\otimes K_Y^r= \mathbb{L}^{1+r} \ar[r]^{i_1} \ar[d]
			& J^r(\mathbb{L}^{1-r}) \ar[r]^{p_1} \ar[d]^{\nu} 
			& J^{r-1}(\mathbb{L}^{1-r}) \ar[r] \ar[d]^{=}
			& 0\\
			0 \ar[r]
			& J^{r-1}(\mathbb{L}^{1-r}) \otimes K_Y \ar[r]^{i_2} \ar[d] 
			& J^1(J^{r-1}(\mathbb{L}^{1-r})) \ar[r]^{p_2} \ar[d] 
			& J^{r-1}(\mathbb{L}^{1-r}) \ar[r] 
			& 0\\
			0 \ar[r] 
			& J^{r-2}(\mathbb{L}^{1-r}) \otimes K_Y \ar[r]^{=} \ar[d]
			& J^{r-2}(\mathbb{L}^{1-r}) \otimes K_Y \ar[d]\\
			~ 
			& 0 
			& 0 }
		\]
		
		Since $\widetilde{\delta}$ has principal symbol =$1$, $\widetilde{\delta} \circ i_1 = \mathrm{id}_{\mathbb{L}^{1+r}}$. So, the exact sequence of the first row splits. 
		This gives a unique $\Gamma$-equivariant map $\kappa_1: J^{r-1}(\mathbb{L}^{1-r}) \longrightarrow J^r(\mathbb{L}^{1-r})$ such that 
		\begin{itemize}
			\item \( p_1 \circ \kappa_1 = \mathrm{id}_{J^{r-1}\mathbb{L}^{1-r}}, \label{8.1.4} \)
			\item \( \widetilde{\delta} \circ \kappa_1= 0. \)	  
		\end{itemize}
		
		It is easy to see from the diagram that $p_2 \circ \nu \circ \kappa_1 = \mathrm{id}_{J^{r-1}(\mathbb{L}^{1-r})}$. Hence $\nu \circ \kappa_1$ gives a splitting of the second row of exact sequence. So we get a unique map $\kappa_2: J^1(J^{r-1}(\mathbb{L}^{1-r})) \longrightarrow J^{r-1}(\mathbb{L}^{1-r}) \otimes K_Y $ satisfying
		\begin{itemize}
			\item \( i_2 \circ \kappa_2 = \mathrm{id}_{J^{r-1}(\mathbb{L}^{1-r}) \otimes K_Y}, \)
			\item \( \kappa_2 \circ \nu \circ \kappa_1 = 0. \) 
		\end{itemize}
		Hence we get a unique fist order differential operator 
		\[\kappa_2 \in H^0(Y,\mathrm{Diff}^1(J^{r-1}(\mathbb{L}^{1-r}),J^{r-1}(\mathbb{L}^{1-r}) \otimes K_Y)^{\Gamma}).\] with principal symbol = $1$. Since a $\Gamma$-equivariant first order differential operator from any vector bundle $V$ to $V \otimes K_Y$ with symbol 1 is a $\Gamma$-equivariant holomorphic connection on $V$, $\kappa_2$ is a $\Gamma$-equivariant holomorphic connection on $J^{r-1}(\mathbb{L}^{1-r})$. 
		
		These two are inverse of each other follows from \cite[Proposition 4.5.]{biswas2003coupled}. Two equivalent $\mathrm{SL}(r,\mathbb{C})$-connection correspond to a unique element in $\widetilde{\mathcal{U}}$, see \cite[p. 19]{biswas2003coupled}. 
		If $D$ is an $\mathrm{SL}(r,\mathbb{C})$-connection, then $\widetilde{\delta}_D$ has subprincipal symbol $0$. This follows from \cite[ Lamma $6.1.$, Lemma $5.3.$]{biswas2023parabolic}.
		
	\end{proof}
	\begin{corollary} \label{corollary 1.19}
		There is a canonical bijection between the space of parabolic $\mathrm{SL}^p_X(r)$ and \[
		\mathcal{U}
		=\bigl\{\delta \in H^0(X,\mathrm{Diff}^r_X({\mathcal{L}_*}^{1-r},\mathcal{L}_*^{1+r})
		\mid \sigma_{\mathrm{prin}}(\delta)=1,\ \sigma_{\mathrm{sub}}(\delta)=0\bigr\},
		\]
		where $\sigma_{\mathrm{prin}}$ and $\sigma_{\mathrm{sub}}$ denote the principal and subprincipal symbols respectively, see \cite[Theorem 6.2.]{biswas2023parabolic}.
	\end{corollary}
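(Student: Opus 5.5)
The plan is to obtain Corollary~\ref{corollary 1.19} from Theorem~\ref{thm 1.18} by passing through the equivalence between parabolic bundles on $X$, with weights in $\frac{1}{2c_i+1}\mathbb{Z}$, and $\Gamma$-equivariant (orbifold) bundles on $Y$, where $\phi\colon Y \to X$ is the ramified Galois covering fixed above. The argument has three parts: identify the space of parabolic opers with the space of equivariant opers, identify the space of parabolic differential operators with the space of equivariant differential operators, and then compose these with the bijection of Theorem~\ref{thm 1.18}.

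\textbf{Opers.} Under the equivalence of \cite{mehta1980moduli}, $E_*$ corresponds to $\mathcal{V}$. The equivalence is a tensor functor, so $\mathrm{sym}^{r-1}(E_*)$ corresponds to $\mathrm{sym}^{r-1}(\mathcal{V})$.

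\begin{itemize}
\item Parabolic connections on $\mathrm{sym}^{r-1}(E_*)$, whose residues act on the graded pieces by the parabolic weights, correspond bijectively to $\Gamma$-equivariant holomorphic connections on $\mathrm{sym}^{r-1}(\mathcal{V})$. The direction $Y \to X$ is given by descending the connection to the $\Gamma$-invariant direct image. The reverse direction is given by pulling back and saturating.
\item The triviality of the induced connection on the determinant is preserved in both directions, because $\det$ commutes with the functor.
\item Parabolic automorphisms of $\mathrm{sym}^{r-1}(E_*)$ are exactly the $\Gamma$-equivariant automorphisms of $\mathrm{sym}^{r-1}(\mathcal{V})$. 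Hence the two equivalence relations agree.
\end{itemize}

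It follows that $\mathrm{SL}^p_X(r) \cong \mathrm{SL}^\Gamma_Y(r)$. Lemma~\ref{lemma 1.15} and Lemma~\ref{lemma 1.16} are not needed at this stage.

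\textbf{Differential operators.} By the identification (\ref{1.16}), taken from \cite[Proposition 5.2]{biswas2023parabolic}, and since $\mathcal{L}_*^{1\pm r}$ corresponds to $\mathbb{L}^{1\pm r}$, we have
\[
H^0\bigl(X,\mathrm{Diff}^r_X(\mathcal{L}_*^{1-r},\mathcal{L}_*^{1+r})\bigr) = H^0\bigl(Y,\mathrm{Diff}^r_Y(\mathbb{L}^{1-r},\mathbb{L}^{1+r})\bigr)^\Gamma .
\]
Both symbols are computed on $Y$ via (1.11) and (1.12), and these are $\Gamma$-equivariant constructions. Therefore the conditions $\sigma_{\mathrm{prin}}=1$ and $\sigma_{\mathrm{sub}}=0$ for a parabolic operator are, by definition, the corresponding conditions on the equivariant lift. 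This gives $\mathcal{U} = \widetilde{\mathcal{U}}$.

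\textbf{Main obstacle.} The step I expect to be hardest is checking that the subprincipal symbol is intrinsically meaningful on $X$, that is, independent of the chosen covering $Y$. Since $\mathcal{U}$ is defined in terms of parabolic symbols, this independence is what makes the statement well posed. My first approach would be to compare any two coverings through a common dominating covering $Y' \to Y$. Pullback along a Galois covering preserves jets, and therefore preserves both symbols; this is exactly the compatibility used in \cite[Lemma 5.3, Lemma 6.1]{biswas2023parabolic}. Once this is settled, composing the two identifications with Theorem~\ref{thm 1.18} gives the canonical bijection $\mathrm{SL}^p_X(r) \to \mathcal{U}$, which agrees with \cite[Theorem 6.2]{biswas2023parabolic}.
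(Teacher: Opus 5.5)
Your proposal is correct and follows the paper's route. The paper's proof is just ``\eqref{1.16} together with Theorem~\ref{thm 1.18}'', and it leaves implicit the identification $\mathrm{SL}^p_X(r)\cong\mathrm{SL}^\Gamma_Y(r)$, which is only cited from \cite{biswas2023parabolic} in the final corollary; your first part spells that step out. The covering-independence issue you flag as the main obstacle is not actually needed, because \eqref{1.16} defines the parabolic operators and their symbols through the fixed covering $Y$. If you do pursue independence, note that jets are preserved only because the common cover of two such coverings (each ramified to order exactly $2c_i+1$ over $x_i$ and unramified elsewhere) is unramified over each of them, whereas pulling back jets along a genuinely ramified covering is not an isomorphism.
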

	\begin{proof}
		Follows from (\ref{1.16}) and Theorem \ref{thm 1.18}.
	\end{proof}
	
	\subsection{Real Parabolic Vector Bundle}  
	Suppose $X$ is endowed with an anti-holomorphic involution $\sigma_X$ satisfying the condition $\sigma_X(S)=S$.   
	  
	  A real holomorphic vector bundle $(V,\sigma_V)$ on $(X,\sigma_X,S)$ is called a
	  \emph{real parabolic vector bundle} \cite{amrutiya2014real} if the following conditions are satisfied:
	  \begin{itemize}
	  	\item For each $x_i \in S$, the fiber $V_{x_i}$ is equipped with a filtration
	  	\[
	  	V_{x_i} = V_{i,1} \supset V_{i,2} \supset \cdots \supset V_{i,l_i}
	  	\supset V_{i,l_i+1} = 0,
	  	\]
	  	such that
	  	\[
	  	(\sigma_V)_{x_i}(V_{i,j}) = V_{\sigma_X(i),j}.
	  	\tag{1.20}
	  	\]
	  	
	  	\item For each $x_i \in S$, there are associated parabolic weights
	  	\[
	  	0 \le \alpha_{i,1} < \alpha_{i,2} < \cdots < \alpha_{i,l_i} < 1,
	  	\]
	  	satisfying
	  	\[
	  	\alpha_{i,j} = \alpha_{\sigma_X(i),j}.
	  	\]
	  \end{itemize}

	\section{Real Slice of Parabolic $\mathrm{SL}(r,\mathbb{C})$-Opers}    
	From now on, let $\sigma_X(S)=S.$ Fix a theta characteristic $\mathcal{L}$ and $\hat{\sigma}^0_1$, where \( \hat{\sigma}^0_1 : \mathcal{L} \simeq \sigma^*_X\overline{\mathcal{L}} \) satisfying   $(\sigma^*_X\overline{\hat{\sigma}^0_1}) \circ \hat{\sigma}^0_1 \in \pm\text{id}_{\mathcal{L}}$. Such a theta characteristic exists \cite{MR286136}.
	
	\begin{proposition}\label{prop:2.1}
		If $\mathcal{L}$ is real and $c_{i}=c_{\sigma_{X}({i})}$, $E_*$ is a real parabolic vector bundle.
	\end{proposition}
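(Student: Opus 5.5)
The plan is to construct a real structure $\sigma_E : E \to \sigma_X^*\overline{E}$ in three steps: first on the nonsplit extension $\widetilde{E}$ of (\ref{1.6}), then by restriction on $E$ of (\ref{1.7}), and finally checking that it respects the parabolic filtration (\ref{1.8}) and weights.

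\textbf{Step 1: the extension $\widetilde{E}$.} The real structure $\hat{\sigma}^0_1:\mathcal{L}\simeq\sigma_X^*\overline{\mathcal{L}}$ induces a dual real structure $\mathcal{L}^*\simeq\sigma_X^*\overline{\mathcal{L}^*}$. Applying $\sigma_X^*\overline{(\cdot)}$ to (\ref{1.6}) gives an exact sequence
\[0\to\sigma_X^*\overline{\mathcal{L}}\to\sigma_X^*\overline{\widetilde{E}}\to\sigma_X^*\overline{\mathcal{L}^*}\to 0.\]
Via these identifications, it becomes an extension of $\mathcal{L}^*$ by $\mathcal{L}$. Its class in $H^1(X,K_X)=\mathbb{C}$ is obtained from the class $1$ of $\widetilde{E}$ by the conjugate-linear map induced by $\sigma_X$ and $\hat\sigma^0_1$. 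Under Serre duality, this map corresponds to $\omega\mapsto\overline{\sigma_X^*\omega}$ on $H^1(X,K_X)$, composed with the trace. Since $\int_X\overline{\sigma_X^*\omega}=-\overline{\int_X\omega}$ (orientation reversal), the class becomes a nonzero scalar $\lambda$, possibly $-1$ (also tracking the sign $\pm 1$ in $\hat\sigma^0_1$). The key fact I will use is that any two nonsplit extensions of $\mathcal{L}^*$ by $\mathcal{L}$ are isomorphic as bundles, since $H^1(K_X)$ is one-dimensional. After rescaling the map on $\mathcal{L}$ or $\mathcal{L}^*$ by a constant, this gives an isomorphism $\widetilde{\sigma}:\widetilde{E}\to\sigma_X^*\overline{\widetilde{E}}$ compatible with (\ref{1.6}).

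I then check that $(\sigma_X^*\overline{\widetilde\sigma})\circ\widetilde\sigma$ is $\pm\mathrm{id}$. It is an automorphism of $\widetilde{E}$ preserving $\mathcal{L}$. Since $\widetilde{E}$ is nonsplit, its automorphisms preserving the sequence are scalars plus elements of $H^0(\mathrm{Hom}(\mathcal{L}^*,\mathcal{L}))=H^0(K_X)$. I expect this step to be the main obstacle: both the sign $\lambda$ and this unipotent ambiguity have to be controlled.

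To handle it, I will choose $\widetilde\sigma$ to be the scalar multiple $c\cdot$(canonical lift) for which the composite acts on the graded pieces as $\pm1$. The unipotent part $u\in H^0(K_X)$ satisfies a cocycle condition $u+\sigma$-conjugate$(u)=0$. I will kill it by replacing $\widetilde\sigma$ with $\widetilde\sigma\circ(1+u/2)$, which is the standard averaging trick. The result is that $\widetilde{E}$ is real (respectively quaternionic, matching $\mathcal{L}$); the proposition has $\mathcal{L}$ real, so $\widetilde{E}$ is real.

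\textbf{Step 2: restriction to $E$.} Since $\sigma_X(S)=S$, the divisor $S$ is $\sigma_X$-invariant, so $\sigma_X^*\overline{\mathcal{L}^*(-S)}=\sigma_X^*\overline{\mathcal{L}^*}(-S)$. The real structure on $\mathcal{L}^*$ therefore preserves $\mathcal{L}^*(-S)$. Because $\widetilde\sigma$ commutes with $p_0$, it maps $E=p_0^{-1}(\mathcal{L}^*(-S))$ onto $\sigma_X^*\overline{E}$. This restriction defines the real structure $\sigma_E$ on $E$, and $E$ remains compatible with (\ref{1.7}).

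\textbf{Step 3: the parabolic data.} At each $x_i$, the map $\sigma_E$ takes $E_{x_i}$ to $\overline{E}_{\sigma_X(x_i)}$. It carries the image of $\mathcal{L}^*(-S)_{x_i}$ to the corresponding subspace at $\sigma_X(x_i)$, because $\sigma_E$ respects the sequence (\ref{1.7}); this is condition (1.20) for the filtration (\ref{1.8}). The weights $\frac{c_i}{2c_i+1},\frac{c_i+1}{2c_i+1}$ coincide at $x_i$ and $\sigma_X(x_i)$ exactly by the hypothesis $c_i=c_{\sigma_X(i)}$. Hence $E_*$ is a real parabolic vector bundle.
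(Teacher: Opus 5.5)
Your proof is correct in substance and takes a genuinely different route from the paper. The paper stays with $E$ itself. It makes $\mathcal{O}_X(-S)$ and $\mathcal{L}^*$, hence $\mathcal{L}^*(-S)$, real, and asserts that the class of (1.7) is ``defined over $\mathbb{R}$'', so that $E$ acquires $\sigma^0_1$ with $\sigma_X^*\overline{\sigma^0_1}\circ\sigma^0_1=\mathrm{id}$. It then states that $\sigma^0_1$ transports the flag.

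You instead build the real structure on the nonsplit $\widetilde{E}$ of (1.6). There $H^1(X,K_X)$ is one-dimensional and the sequence-preserving automorphisms $\mathbb{C}^*\cdot(1+H^0(X,K_X))$ are explicit, so involutivity can actually be arranged. Only then do you restrict to $E=p_0^{-1}(\mathcal{L}^*(-S))$ using $\sigma_X(S)=S$.

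This difference matters. Since $H^1(X,K_X(S))\cong H^0(X,\mathcal{O}_X(-S))^*=0$ for $S\neq\emptyset$, the sequence (1.7) splits. So an involutive real structure on $E$ as a bare bundle is cheap: take one on $\mathcal{L}\oplus\mathcal{L}^*(-S)$. The real content of the proposition is compatibility with the flag lines at the $x_i$, which are what remember the nonsplit $\widetilde E$. The paper asserts this compatibility, while in your route it falls out of the construction. For $g\neq 1$ you could also shorten Step 1: $\widetilde E\cong J^1(\mathcal{L}^*)$, and functoriality of jets under $\sigma_X^*\overline{(\cdot)}$ gives $\widetilde\sigma$ with $\sigma_X^*\overline{\widetilde\sigma}\circ\widetilde\sigma=\pm\mathrm{id}$ directly, with no normalization.

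Two details need correcting.

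\textbf{The averaging step has a sign error.} Put $\Phi:=\sigma_X^*\overline{\widetilde\sigma}\circ\widetilde\sigma=\epsilon(1+u)$ with $\epsilon=\pm1$, and $\iota(\phi):=\widetilde\sigma^{-1}\circ\sigma_X^*\overline{\phi}\circ\widetilde\sigma$. Since $\sigma_X^*\overline{\Phi}=\widetilde\sigma\circ\sigma_X^*\overline{\widetilde\sigma}$, you get $\iota(\Phi)=\Phi$, hence $\iota(u)=u$ rather than $u+\iota(u)=0$. Replacing $\widetilde\sigma$ by $\widetilde\sigma\circ(1+v)$ turns the composite into $\epsilon(1+u+v+\iota(v))$, because products of two nilpotent endomorphisms of $\widetilde E$ vanish. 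So the right choice is $v=-u/2$. As written, with your condition and the correction $1+u/2$, the step does not close.

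\textbf{Step 3 gives the wrong reason for preserving the flag.} The line written $\mathcal{L}^*(-S)_{x_i}\subset E_{x_i}$ in (1.8) is a complement of $\mathcal{L}_{x_i}$, namely $\ker(E_{x_i}\to\widetilde E_{x_i})$, which $p$ maps isomorphically onto $\mathcal{L}^*(-S)_{x_i}$. Respecting (1.7) controls only $\mathcal{L}_{x_i}$ and the quotient, not a complementary line. The correct reason is that $\sigma_E=\widetilde\sigma|_E$ commutes with the inclusion $E\hookrightarrow\widetilde E$. It therefore carries this kernel at $x_i$ onto the corresponding kernel in $\overline{E}_{\sigma_X(x_i)}$.
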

	
	\begin{proof}
		First we shall see that If $\mathcal{L}$ is real, $\mathcal{L}^*(-S)$ is real.
		For all $U$ open in $X$, define:
		\begin{align*}
			\sigma^{\mathcal{O}_X(-S)}_U: \mathcal{O}_X(-S)(U) \rightarrow \mathcal{O}_X(-S)(\sigma_X(U)),\quad s \mapsto \overline{s \circ \sigma_X}.
		\end{align*}
		Since $S$ is $\sigma_X$ invariant, this defines an anti-holomorphic involution on $\mathcal{O}_X(-S)$. Hence $\mathcal{O}_X(-S)$ is real.
		
		\medskip
		
		Let $\sigma^{\mathcal{L}}: \mathcal{L} \rightarrow \mathcal{L}$ be a real structure on $\mathcal{L}$. For all $U$ open in $X$, define an anti-holomorphic involution:
		\begin{align*}
			\sigma^{\mathcal{L}^*}_U: \mathcal{L}^*(U) \rightarrow \mathcal{L}^*(\sigma_X(U)),\quad f \mapsto \sigma^{\mathcal{O}_X}_U \circ f \circ \sigma^\mathcal{L}_{\sigma_X(U)}.
		\end{align*}
		It is straightforward to check that $\sigma^{\mathcal{L}^*}$ is an anti-holomorphic involution. Now $\mathcal{L}^*(-S)$ is real, follows from the fact that the tensor product of two real vector bundle is real. 
		
		From (\ref{1.7}), $E$ is obtained as an extension of $\mathcal{L}^*(-S)$ over $\mathcal{L}$. Since both $\mathcal{L}^*(-S)$ and $\mathcal{L}$ are real and the extension class is preserved by the induced involution, the extension class is defined over $\mathbb{R}$. Hence $E$ admits a real structure $\sigma^0_1: E \rightarrow \sigma_X^*\overline{E}$ such that $\sigma_X^*\overline{\sigma^0_1} \circ \sigma^0_1 = id$.
		
		Moreover, $\sigma_X^*\overline{E}$ has a parabolic structure induced from parabolic structure of $E$ via $\sigma^0_1$. Hence $\sigma^0_1$ transports the filtration of $E_{x_i}$ to the filtration of $E_{\sigma_X(x_i)}$. Now choosing same parabolic weights of $x_i$ and $\sigma_X(x_i)$, we get the result.
	\end{proof}

	Hence $\sigma^1_0$ is an isomorphism of parabolic vector bundles:
	\begin{equation} \label{2.1}
		\sigma_1^0: E_* \longrightarrow \sigma_X^*\overline{E_*} \tag{2.1}
	\end{equation}
	such that $\sigma_X^* \overline{\sigma_1^0} \circ \sigma_1^0 = \text{id}.$
	
	Again, (\ref{2.1}) induces isomorphism  
	\begin{equation} \label{2.2}
		\mathrm{sym}^{r-1}(\sigma_1^0): \mathrm{sym}^{r-1}E_* \rightarrow \mathrm{sym}^{r-1}(\sigma_X^*\overline{E_*}) \cong \sigma_X^*\overline{\mathrm{sym}^{r-1}E_*} \tag{2.2}       
	\end{equation}
	such that $\sigma_X^* \overline{\mathrm{sym}^{r-1}(\sigma_1^0)} \circ \mathrm{sym}^{r-1}(\sigma_1^0) = \text{id}.$ 
	
	\begin{remark} \label{remark: 2.1.1}
	If $\mathcal{L}$ is quaternionic and $c_{i}=c_{\sigma_{X}({i})}$, it is clear from Proposition \ref{prop:2.1} that $E_*$ will not be real parabolic. Instead, the isomorphism $\sigma_1^0: E_* \longrightarrow \sigma_X^*\overline{E_*}$
	satisfies the condition \[\sigma_X^* \overline{\sigma_1^0} \circ \sigma_1^0 = -\mathrm{id}. \tag{2.3}\] 
	Moreover, $\mathrm{sym}^{r-1}(\sigma_1^0)$ satisfies the condition \[\sigma_X^* \overline{\mathrm{sym}^{r-1}(\sigma_1^0)} \circ \mathrm{sym}^{r-1}(\sigma_1^0) = -\mathrm{id}.\tag{2.4}\]
    \end{remark}
    
	\begin{remark} \label{corollary 2.2}
		If $\mathcal{L}$ is real, $E_*$ is a real parabolic vector bundle on $X$. Moreover, $\mathcal{V}$ denotes the $\Gamma$-equivarint bundle corresponding to $E_*$. Using this correspondence, we get a $\Gamma$-equivariant isomorphism
		\begin{equation} \label{2.5}
			\widetilde{\sigma}_1^0: \mathcal{V} \longrightarrow \sigma_Y^* \overline{\mathcal{V}} \tag{2.5}
		\end{equation}
		satisfying $\sigma_Y^*\widetilde{\sigma}_1^0 \circ \widetilde{\sigma}_1^0 = \mathrm{id}.$
		Similarly, \eqref{2.5} induces the $\Gamma$-equivariant isomorphism
		\begin{equation} \label{eq:2.6}
			\mathrm{sym}^{r-1}\widetilde{\sigma}_1^0: \mathrm{sym}^{r-1}\mathcal{V} \longrightarrow \sigma_Y^* \overline{\mathrm{sym}^{r-1}\mathcal{V}} \tag{2.6}
		\end{equation}
		satisfying $\sigma_Y^*\overline{\mathrm{sym}^{r-1}\widetilde{\sigma}_1^0} \circ \mathrm{sym}^{r-1}\widetilde{\sigma}_1^0 = \mathrm{id}.$
	\end{remark}

	Suppose $C_X^p(r)$ denotes the space of parabolic $\mathrm{SL}(r,\mathbb{C})$-connection on $\mathrm{sym}^{r-1}(E_*)$. First, we shall see an anti-holomorphic involution on $C_X^p(r)$ induced from $\sigma_X$ and the real parabolic structure of $E_*$.
	
Define a map
\begin{align} \label{2.7}
	\lambda : \mathcal{C}^p_X(r) &\longrightarrow \mathcal{C}^p_X(r), \tag{2.7} \\
	D &\mapsto 
	\left(\mathrm{sym}^{r-1}(\sigma_1^0)\right)^{-1} \otimes \mathrm{id}_{K_X(S)}
	\circ \sigma_X^* \overline{D}
	\circ \mathrm{sym}^{r-1}(\sigma_1^0). \notag
\end{align}
	
	Hence we have:
	\[
	\begin{aligned}
		\xymatrix@C=4em@R=4em{
			\sigma_X^*\overline{\mathrm{Sym}^{r-1}E_*} \ar[d]_{(\mathrm{Sym}^{r-1}\sigma^0_1)^{-1}} \ar[r]^{\sigma_X^*\overline{D}} & \sigma_X^*\overline{\mathrm{Sym}^{r-1}E_*} \otimes K_X(S) \ar[d]^{(\mathrm{Sym}^{r-1}\sigma^0_1)^{-1} \otimes id_{K_X(S)}}  \\
			\mathrm{Sym}^{r-1}E_* \ar[r]_{\lambda(D)} & \mathrm{Sym}^{r-1}E_*\otimes K_X(S)} 
	\end{aligned}
	\]

	\begin{proposition} \label{prop:2.2}
		$\lambda$ is a well defined anti-holomorphic involution on $\mathcal{C}_X^p(r)$. 
	\end{proposition}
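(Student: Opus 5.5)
The proof splits into three checks: $\lambda(D)$ lies in $\mathcal{C}^p_X(r)$, $\lambda\circ\lambda=\mathrm{id}$, and $\lambda$ is anti-holomorphic (conjugate-affine) on the affine space $\mathcal{C}^p_X(r)$. Throughout, write $\Phi:=\mathrm{sym}^{r-1}(\sigma_1^0)$.

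\textbf{Well-definedness.} By Proposition \ref{prop:1}, $\sigma_X^*\overline{D}$ is a parabolic connection on $\sigma_X^*\overline{\mathrm{sym}^{r-1}E_*}$. We use $\sigma_X(S)=S$ and $c_i=c_{\sigma_X(i)}$, as in Proposition \ref{prop:2.1}, so that $\sigma_X^*\overline{\mathrm{sym}^{r-1}E_*}$ carries the parabolic structure transported by $\Phi$. By (\ref{2.2}), $\Phi$ is then an isomorphism of parabolic bundles.

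Conjugating a parabolic connection by a parabolic isomorphism gives a parabolic connection. The Leibniz rule is preserved because $\Phi$ is $\mathcal{O}_X$-linear. The residue becomes $\mathrm{Res}(\lambda(D),x_i)=\Phi_{x_i}^{-1}\circ\overline{\mathrm{Res}(D,\sigma_X(x_i))}\circ\Phi_{x_i}$. This residue preserves the filtration. Its induced action on graded pieces is multiplication by the conjugate of the weight, and the weights are real and equal at $x_i$ and $\sigma_X(x_i)$, so this is the correct weight.

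For the $\mathrm{SL}$ condition, the connection induced on the determinant by $\lambda(D)$ is $\det(\Phi)^{-1}\circ\sigma_X^*\overline{d}\circ\det(\Phi)$. Here $\sigma_X^*\overline{d}=d$ on $\mathcal{O}_X$. Moreover $\det\Phi$ is an isomorphism $\mathcal{O}_X\to\mathcal{O}_X$, hence a nonzero constant since $X$ is compact, so it commutes with $d$. The induced determinant connection is therefore trivial.

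\textbf{Involution.} Applying $\sigma_X^*\overline{(\cdot)}$ twice is the identity, and $\sigma_X^*\overline{(A\circ B)}=\sigma_X^*\overline{A}\circ\sigma_X^*\overline{B}$. Hence
\[
\lambda(\lambda(D))=\bigl(\sigma_X^*\overline{\Phi}\circ\Phi\bigr)^{-1}\circ D\circ\bigl(\sigma_X^*\overline{\Phi}\circ\Phi\bigr).
\]
By (\ref{2.2}) the composite $\sigma_X^*\overline{\Phi}\circ\Phi$ equals $\mathrm{id}$. In the quaternionic case of Remark \ref{remark: 2.1.1} it equals $-\mathrm{id}$, and the two signs cancel. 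In either case $\lambda^2=\mathrm{id}$.

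\textbf{Anti-holomorphicity.} $\mathcal{C}^p_X(r)$ is an affine space modelled on the space of trace-free parabolic Higgs fields $\theta\in H^0(X,\mathrm{End}_0(\mathrm{sym}^{r-1}E_*)\otimes K_X(S))$ with nilpotent residues. We check that $\lambda(D+\theta)=\lambda(D)+\Phi^{-1}\circ\sigma_X^*\overline{\theta}\circ\Phi$, and that $\theta\mapsto\Phi^{-1}\circ\sigma_X^*\overline{\theta}\circ\Phi$ is conjugate-linear. Here $\sigma_X^*\overline{(\cdot)}$ identifies $\sigma_X^*\overline{K_X(S)}$ with $K_X(S)$, using that $\sigma_X$ is anti-holomorphic and $\sigma_X(S)=S$. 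Hence $\lambda$ is conjugate-affine, and in particular anti-holomorphic.

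\textbf{Main obstacle.} The step needing the most care is the parabolic residue condition. One must check that the parabolic structure placed on $\sigma_X^*\overline{E_*}$ in Proposition \ref{prop:1} agrees with the one making $\Phi$ parabolic, and that the weights match under $x_i\leftrightarrow\sigma_X(x_i)$. This is exactly where the hypotheses $\sigma_X(S)=S$ and $c_i=c_{\sigma_X(i)}$ enter.
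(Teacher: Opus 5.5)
Your proof is correct and follows the same route as the paper. Like the paper, you use the conjugated residue formula $\mathrm{Res}(\lambda(D),x_i)=\Phi_{x_i}^{-1}\circ\overline{\mathrm{Res}(D,\sigma_X(x_i))}\circ\Phi_{x_i}$ with $\Phi$ parabolic for the parabolic condition, the relation $\sigma_X^*\overline{\Phi}\circ\Phi=\pm\mathrm{id}$ from \eqref{2.2} for $\lambda^2=\mathrm{id}$, and the fact that $\det\Phi$ is a nonzero constant for the $\mathrm{SL}(r,\mathbb{C})$ condition; your explicit formula for $\lambda^2(D)$, the remark that the weights are real and so unchanged by conjugation, and the conjugate-affine argument for anti-holomorphicity fill in steps the paper only asserts follow ``from the definition''.
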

	
	\begin{proof} 
		To see $\lambda(D)$ is a logarithmic connection, is straight forward from the definition as $\sigma_X^*\overline{D}$ is a logarithmic connection. 
		
		Let $x_i \in S$. $\mathrm{sym}^{r-1}\sigma^0_1$ is a parabolic isomorphism. So it preserves the filtration at each parabolic point. Moreover, $\mathrm{Res}(\lambda(D),x_i) = \mathrm{sym}^{r-1}(\sigma_1^0)^{-1}_{x_i} \circ \mathrm{Res}(\sigma_X^*\overline{D},x_i) \circ \mathrm{sym}^{r-1}(\sigma_1^0)_{x_i}.$ Hence $\lambda(D)$ is a parabolic connection.
		
		\begin{align*}
			\lambda^2(D)
			&=
			\bigl(\mathrm{sym}^{r-1}(\sigma^0_1)^{-1}\otimes \mathrm{id}\bigr)
			\circ
			\sigma_X^*\overline{\lambda(D)}
			\circ
			\mathrm{sym}^{r-1}(\sigma^0_1) \\
			&=
			\bigl(\mathrm{sym}^{r-1}(\sigma^0_1)^{-1}\otimes \mathrm{id}\bigr)
			\circ
			\sigma_X^*\overline{
				\bigl(\mathrm{sym}^{r-1}(\sigma^0_1)^{-1}\otimes \mathrm{id}\bigr)
				\circ
				\sigma_X^*\overline{D}
				\circ
				\mathrm{sym}^{r-1}(\sigma^0_1)}
			\circ
			\mathrm{sym}^{r-1}(\sigma^0_1).
		\end{align*}
		$\lambda$ is an involution follows from (\ref{2.2}) and $\sigma_X$ is an involution. Anti-holomorphic property holds from the definition, $\sigma_X$ is anti-holomorphic.
		
		It is easy to see that if $D$ is an $\mathrm{SL}(r,\mathbb{C})$-connection, $\sigma_X^*\overline{D}$ is so. $\sigma_X^*\overline{D}$ induces a map
		\[\mathrm{det}(\sigma_X^*\overline{D}): \sigma_X^*\overline{\mathcal{O}_X} \rightarrow \sigma_X^*\overline{\mathcal{O}_X}.\] Similarly, $\mathrm{det} \bigl(\mathrm{sym}^{r-1}(\sigma_1^0) \bigr)^{-1}$ induces a map \[\mathrm{det}\bigl(\mathrm{sym}^{r-1}(\sigma_1^0)^{-1} \bigr): \sigma_X^* \overline{\mathcal{O}_X} \rightarrow \mathcal{O}_X. \] From the definition of $\lambda$, it is easy to see that 
		\begin{align*}
			\mathrm{det}(\lambda(D))= \mathrm{det} \bigl( \mathrm{sym}^{r-1}(\sigma_1^0)^{-1} \bigr) \otimes \mathrm{id} \circ \mathrm{det}(\sigma_X^*\overline{D}) \circ \mathrm{det}(\bigl(\mathrm{sym}^{r-1}(\sigma_1^0) )\bigr)
		\end{align*}
		Any holomorphic isomorphism between line bundles over a compact Riemann surface is of the non-zero constant multiple of identity map. This together with $\sigma_X^*\overline{D}$ is a $\mathrm{SL}(r,\mathbb{C})$ connection, proves $\lambda(D)$ is also a $\mathrm{SL}(r,\mathbb{C})$ connnection. 
	\end{proof}
	
	\begin{remark}\label{remark 2.2.1}
	If $\mathcal{L}$ is quaternionic, $\lambda$ is still an anti-holomorphic involution.
    \end{remark}
	
	\begin{remark}\label{remark 2.2.2}
		 As in \eqref{2.7}, there is a well-defined anti-holomorphic involution on the space of $\Gamma$-equivariant $\mathrm{SL}(r,\mathbb{C})$-connection, $\mathcal{C}_Y^\Gamma(r)$ defined by:
	   \begin{align}
	   	\widetilde{\lambda} : \mathcal{C}_Y^\Gamma(r) &\longrightarrow \mathcal{C}_Y^\Gamma(r) \tag{2.8} \\
	   	D &\longmapsto 
	   	\left( \mathrm{sym}^{\,r-1}(\widetilde{\sigma}^0_1)^{-1}\otimes id_{K_Y} 
	   	\circ \sigma_Y^*\overline{D} 
	   	\circ \mathrm{sym}^{\,r-1}(\widetilde{\sigma}^0_1) 
	   	\right) \notag.
	   \end{align}
	   
	\end{remark}

	Next we shall see an anti-holomorphic involution on $\mathrm{Aut}(sym^{r-1}(E_*))$ induced from $\sigma_X$ and real parabolic structure of $E_*$
	
	Define a map
	\begin{align} \label{2.9}
		\tau : \mathrm{Aut}( \mathrm{sym}^{r-1} E_*)  \longrightarrow \mathrm{Aut}( \mathrm{sym}^{r-1} E_*) \tag{2.9} \\
		\psi \longmapsto \left(\mathrm{sym}^{r-1}\sigma^0_1\right)^{-1} \circ \sigma_X^* \overline{\psi} \circ \mathrm{sym}^{r-1}\sigma^0_1 \notag.
	\end{align}

	Hence we have the commutative diagram:
	\[
	\begin{aligned}
		\xymatrix{
			\sigma_X^*\overline{\mathrm{sym}^{r-1}E_*} \ar[d]_{\left(\mathrm{sym}^{r-1}\sigma^0_1\right)^{-1}} \ar[r]^{\sigma_X^*\overline{\psi}} & \sigma_X^*\overline{\mathrm{sym}^{r-1}E_*} \ar[d]^{\left(\mathrm{sym}^{r-1}\sigma^0_1\right)^{-1}} \\
			\mathrm{sym}^{r-1}E_* \ar[r]_{\tau(\psi)} & \mathrm{sym}^{r-1}E_*
		}
	\end{aligned}
	\]

	\begin{proposition} \label{prop:2.4}
		$\tau$ is an anti-holomorphic involution.
	\end{proposition}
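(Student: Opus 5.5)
We prove the claim in three steps, mirroring the proof of Proposition~\ref{prop:2.2}: well-definedness, the involution property, and anti-holomorphicity. Throughout write $\Sigma := \mathrm{sym}^{r-1}(\sigma^0_1) : \mathrm{sym}^{r-1}E_* \to \sigma_X^*\overline{\mathrm{sym}^{r-1}E_*}$, the parabolic isomorphism of \eqref{2.2}.

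\textbf{Well-definedness.} Let $\psi$ be a parabolic automorphism of $\mathrm{sym}^{r-1}E_*$. Arguing as in Proposition~\ref{prop:1}, $\sigma_X^*\overline{\psi}$, defined by $\sigma_X^*\overline{\psi}(\sigma_X^*\overline{s}) := \sigma_X^*\overline{\psi(s)}$, is a holomorphic automorphism of $\sigma_X^*\overline{\mathrm{sym}^{r-1}E_*}$. Holomorphicity holds because conjugation composed with pullback by the anti-holomorphic map $\sigma_X$ sends holomorphic sections to holomorphic sections. It is invertible with inverse $\sigma_X^*\overline{\psi^{-1}}$.

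Its fiber at $x_i$ is $\overline{\psi_{\sigma_X(x_i)}}$. Since $\psi$ preserves the filtration at $\sigma_X(x_i)$, this map preserves the conjugated filtration, which is exactly the induced parabolic structure on $\sigma_X^*\overline{\mathrm{sym}^{r-1}E_*}$ at $x_i$. Here we use $\sigma_X(S)=S$ together with the equality of weights $c_i=c_{\sigma_X(i)}$ from Proposition~\ref{prop:2.1}. Because $\Sigma$ and $\Sigma^{-1}$ are parabolic isomorphisms, the composite $\tau(\psi)=\Sigma^{-1}\circ\sigma_X^*\overline{\psi}\circ\Sigma$ is again a parabolic automorphism of $\mathrm{sym}^{r-1}E_*$.

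\textbf{Involution property.} Compute
\[
\tau^2(\psi)=\Sigma^{-1}\circ\sigma_X^*\overline{\Sigma^{-1}}\circ\sigma_X^*\overline{\sigma_X^*\overline{\psi}}\circ\sigma_X^*\overline{\Sigma}\circ\Sigma .
\]
Since $\sigma_X$ is an involution, $\sigma_X^*\overline{\sigma_X^*\overline{\psi}}=\psi$ under the canonical identification $\sigma_X^*\overline{\sigma_X^*\overline{V}}=V$. By \eqref{2.2}, $\sigma_X^*\overline{\Sigma}\circ\Sigma=\mathrm{id}$. Taking inverses, $\Sigma^{-1}\circ\sigma_X^*\overline{\Sigma^{-1}}=\mathrm{id}$ as well, so $\tau^2(\psi)=\psi$.

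In the quaternionic case of Remark~\ref{remark: 2.1.1} we have $\sigma_X^*\overline{\Sigma}\circ\Sigma=-\mathrm{id}$ instead. The two factors then contribute $(-1)\cdot(-1)=1$: the scalar $-1$ is real and central, so it passes through $\psi$ and cancels. Hence $\tau$ is an involution in that case too.

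\textbf{Anti-holomorphicity.} Conjugation is anti-linear, so $\overline{a\psi_1+b\psi_2}=\bar a\,\overline{\psi_1}+\bar b\,\overline{\psi_2}$. Pullback and conjugation by the fixed linear maps $\Sigma^{\pm1}$ are complex linear. Therefore $\tau$ is the restriction to the open subset $\mathrm{Aut}\subset H^0(X,\mathrm{End}(\mathrm{sym}^{r-1}E_*))$ of a conjugate-linear map on this finite-dimensional vector space, and such a map is anti-holomorphic. We also note that $\tau(\psi_1\psi_2)=\tau(\psi_1)\tau(\psi_2)$, which is useful later for equivariance.

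\textbf{Main obstacle.} The only delicate point is checking that the identifications $\sigma_X^*\overline{\sigma_X^*\overline{V}}\cong V$ and $\mathrm{sym}^{r-1}(\sigma_X^*\overline{E_*})\cong\sigma_X^*\overline{\mathrm{sym}^{r-1}E_*}$ are compatible with the parabolic filtrations. This is exactly what makes the cancellation in the involution step legitimate.
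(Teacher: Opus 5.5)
Your proof is correct and follows the same route as the paper. The paper simply asserts that the involution property comes from \eqref{2.2} together with $\sigma_X$ being an involution, and that anti-holomorphicity comes from $\sigma_X$ being anti-holomorphic; you have filled in the computation of $\tau^2$, the check that $\tau(\psi)$ is again parabolic, and the sign cancellation in the quaternionic case, with only one harmless imprecision: the parabolic automorphisms are open in the subspace of parabolic endomorphisms (which your conjugate-linear map preserves), not in all of $H^0(X,\mathrm{End}(\mathrm{sym}^{r-1}E_*))$.
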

	
	\begin{proof}
		Involution follows from the definition, (\ref{2.2}) and $\sigma_X$ is an involution. Anti-holomorphic also follows from the definition and $\sigma_X$ is an anti-holomorphic.
	\end{proof}
	
	 \begin{remark}
	  If $\mathcal{L}$ is quaternionic, then $\tau$ remains an anti-holomorphic involution.
	\end{remark}
	
	\begin{remark} \label{remark 2.4.2} As in \eqref{2.9}, there is a well-defined involution on the space of $\Gamma$-equivariant automorphisms of $\mathrm{sym}^{r-1}\mathcal{V}$, given by
		\begin{align} \label{2.10}
			\widetilde{\tau} : \mathrm{Aut}^\Gamma\bigl( \mathrm{sym}^{r-1} \mathcal{V} \bigr)  
			&\longrightarrow \mathrm{Aut}^\Gamma\bigl( \mathrm{sym}^{r-1} \mathcal{V} \bigr), \tag{2.10} \\
			\psi &\longmapsto 
			\left(\mathrm{sym}^{r-1}\widetilde{\sigma}^0_1\right)^{-1}
			\circ \sigma_Y^* \overline{\psi}
			\circ \mathrm{sym}^{r-1}\widetilde{\sigma}^0_1. \notag
		\end{align}
	\end{remark}
	
	\medskip
	
	Now we shall define an action of $\mathrm{Aut}(\mathrm{sym}^{r-1} E_*)$ on $C^p_X(r)$.
	
	\begin{proposition} \label{prop:2.5}
		The map $\alpha: \mathrm{Aut}(\mathrm{sym}^{r-1} E_*) \times C^p_X(r) \to C^p_X(r)$  
		\begin{equation} \label{2.11}
			(\psi, D) \mapsto \psi * D := (\psi \otimes \mathrm{id})^{-1} \circ D \circ \psi \tag{2.11}
		\end{equation}
		 is a well-defined action.
	\end{proposition}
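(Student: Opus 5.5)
To show that $\alpha$ is a well-defined action, I would first check that $\psi * D$ lies in $C^p_X(r)$, and then verify the two action axioms. Throughout, $\psi$ is a holomorphic automorphism of $\mathrm{sym}^{r-1}E$ that preserves the parabolic filtrations at every $x_i \in S$.

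\textbf{Step 1: $\psi * D$ is a logarithmic connection.} I would verify the Leibniz rule directly. For a local section $s$ and a function $f$,
\[
(\psi\otimes\mathrm{id})^{-1} D(\psi(fs)) = (\psi\otimes\mathrm{id})^{-1}\bigl(fD(\psi s) + \psi(s)\otimes df\bigr) = f\,(\psi*D)(s) + s\otimes df ,
\]
since $\psi$ is $\mathcal{O}_X$-linear. The poles of $\psi * D$ are clearly only along $S$.

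\textbf{Step 2: $\psi * D$ is parabolic.} Taking fibers in (\ref{1.4}) gives
\[
\mathrm{Res}(\psi*D,x_i)=\psi_{x_i}^{-1}\circ \mathrm{Res}(D,x_i)\circ \psi_{x_i}.
\]
Because $\psi_{x_i}$ preserves each $V_{i,j}$, so does this conjugate. On $V_{i,j}/V_{i,j+1}$ the induced endomorphism is the conjugate of multiplication by $\alpha_{i,j}$ by the induced isomorphism of graded pieces. Conjugating a scalar leaves it unchanged, so the induced map is again multiplication by $\alpha_{i,j}$.

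\textbf{Step 3: the $\mathrm{SL}(r,\mathbb{C})$ condition.} The induced connection on the determinant is
\[
\det(\psi*D) = (\det\psi)^{-1}\circ \det D\circ \det\psi .
\]
Here $\det\psi$ is a holomorphic automorphism of $\mathcal{O}_X$. As in the proof of Proposition~\ref{prop:2.2}, on a compact Riemann surface it is a nonzero constant $c$. Conjugation by a constant fixes the trivial connection $d$, so $\det(\psi*D)=d$.

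\textbf{Step 4: the action axioms.} Clearly $\mathrm{id}*D=D$. For compatibility, with the formula as written,
\[
\psi_1*(\psi_2*D) = (\psi_1\otimes\mathrm{id})^{-1}(\psi_2\otimes\mathrm{id})^{-1} D\,\psi_2\psi_1 = (\psi_2\psi_1)*D .
\]
So $\alpha$ is a right action. To obtain a left action one would set $\psi\cdot D := \psi^{-1}*D$. I would either state it as a right action or adopt this convention. This is only a bookkeeping choice, and the induced orbit space, i.e.\ the space of opers, is the same either way.

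The only step with real content is Step 2. The key point there is that a filtration-preserving isomorphism induces isomorphisms on the graded pieces, and conjugation fixes the scalars $\alpha_{i,j}$. Everything else is formal.
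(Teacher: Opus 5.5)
Your proof is correct and takes essentially the same approach as the paper: $\psi*D$ is the conjugate of $D$ by $\psi$, and the $\mathrm{SL}(r,\mathbb{C})$ condition survives because $\det\psi$ is a nonzero constant on the compact surface $X$. You also check the Leibniz rule, the parabolic residue condition via $\mathrm{Res}(\psi*D,x_i)=\psi_{x_i}^{-1}\circ\mathrm{Res}(D,x_i)\circ\psi_{x_i}$, and the action axioms, which the paper leaves implicit; your remark that formula (2.11) as written gives a right action is accurate and does not change the orbit space $\mathrm{SL}^p_X(r)$.
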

	
	\begin{proof}
		From the definition of $\alpha$, we have the commutative diagram
		\[
		\xymatrix@C=4em@R=4em
		{
			\mathrm{sym}^{r-1}E_*
			\ar[d]^{\psi^{-1}}
			\ar[r]^D &
			\mathrm{sym}^{r-1}E_* \otimes K_X(S)
			\ar[d]_{\psi^{-1} \otimes id_{K_X(S)}} \\
			\mathrm{sym}^{r-1}E_*
			\ar[r]^{\psi*D} &
			\mathrm{sym}^{r-1}E_* \otimes K_X(S)}
		\]
		Every holomorphic isomorphism between line bundles over a compact Riemann surface is of the non-zero constant multiple of identity map. This together with $D$ is a $\mathrm{SL}(r,\mathbb{C})$ connection prove that $\psi * D$ is also a $\mathrm{SL}(r,\mathbb{C})$ connection. This shows that the action is well-defined. 
	\end{proof}

	\begin{remark} \label{remark 2.5.1}
	As in \eqref{2.11}, we can define an action of $\Gamma$-equivariant automorphism on $\Gamma$-equivariant $\mathrm{SL}(r,\mathbb{C})$-connection $C_Y^\Gamma(r)$.
	\[ \mathrm{Aut}^{\Gamma}( \mathrm{Sym}^{r-1} \mathcal{V}) \times C^\Gamma_Y(r) \longrightarrow C^\Gamma_Y(r), \qquad (\psi,D) \longmapsto \psi*D := \psi^{-1} \otimes id_{K_Y} \circ D \circ \psi.\label{(7.1.1.)} \] 
	\end{remark}

	Let $\mathrm{SL}^p_X(r)$ denotes the space of $\mathrm{SL}(r,\mathbb{C})$-opers on $X$. From the definition, it is clear that $\mathrm{SL}^p_X(r)$ is indeed all classes of $\mathcal{C}_X^p(r)$ under the action defined above, denoted as \( C_X^p(r)/\mathrm{Aut}(\mathrm{Sym}^{r-1}E_*). \)

	\begin{proposition}\label{prop:2.6}
		The ant-holomorphic involution $\lambda$ on $\mathcal{C}_X^p(r)$ induces an involution 
		\begin{equation} \label{2.12}
			\beta: \mathrm{SL}^p_X(r) \longrightarrow \mathrm{SL}^p_X(r). \tag{2.12}
		\end{equation}

	\end{proposition}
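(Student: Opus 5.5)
The plan is to define $\beta$ on equivalence classes by $\beta([D]) := [\lambda(D)]$ and to check that this is independent of the representative. The key identity is the equivariance
\[
\lambda(\psi * D) = \tau(\psi) * \lambda(D), \qquad \psi \in \mathrm{Aut}(\mathrm{sym}^{r-1}E_*),\ D \in \mathcal{C}^p_X(r),
\]
where $\tau$ is the involution of Proposition \ref{prop:2.4} and $*$ is the action of Proposition \ref{prop:2.5}.

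To prove the identity, write $\Sigma := \mathrm{sym}^{r-1}(\sigma^0_1)$. By (\ref{2.11}),
\[
\psi * D = (\psi^{-1}\otimes \mathrm{id}) \circ D \circ \psi .
\]
Conjugation and pullback are compatible with composition and tensor products, so
\[
\sigma_X^*\overline{\psi * D} = (\sigma_X^*\overline{\psi}^{-1}\otimes \mathrm{id}) \circ \sigma_X^*\overline{D} \circ \sigma_X^*\overline{\psi}.
\]
This uses that $\sigma_X^*\overline{K_X(S)}\cong K_X(S)$ canonically, since $\sigma_X(S)=S$; this is the same identification already implicit in (\ref{2.7}). Substituting into (\ref{2.7}) and inserting $\Sigma\circ\Sigma^{-1}$ between the factors gives
\[
\lambda(\psi*D) = \bigl((\Sigma^{-1}\sigma_X^*\overline{\psi}\,\Sigma)^{-1}\otimes \mathrm{id}\bigr)\circ \bigl((\Sigma^{-1}\otimes\mathrm{id})\circ\sigma_X^*\overline{D}\circ\Sigma\bigr)\circ (\Sigma^{-1}\sigma_X^*\overline{\psi}\,\Sigma).
\]
By (\ref{2.9}) the outer factors are $\tau(\psi)$, and the middle factor is $\lambda(D)$. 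This is exactly $\tau(\psi)*\lambda(D)$.

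Well-definedness now follows. If $D' = \psi * D$, then $\lambda(D') = \tau(\psi)*\lambda(D)$. Since $\tau(\psi)\in \mathrm{Aut}(\mathrm{sym}^{r-1}E_*)$ by Proposition \ref{prop:2.4}, we get $[\lambda(D')]=[\lambda(D)]$. Proposition \ref{prop:2.2} shows that $\lambda(D)$ is again a parabolic $\mathrm{SL}(r,\mathbb{C})$-connection, so $\beta$ lands in $\mathrm{SL}^p_X(r)$. Finally, $\beta^2([D]) = [\lambda^2(D)] = [D]$ because $\lambda$ is an involution, so $\beta$ is an involution. Its anti-holomorphicity is inherited from that of $\lambda$ through the quotient map.

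The step I expect to need the most care is the equivariance computation. One must check that $\sigma_X^*\overline{\,\cdot\,}$ commutes with inverses, compositions and tensoring with $\mathrm{id}_{K_X(S)}$ under the canonical identification $\sigma_X^*\overline{K_X(S)}\cong K_X(S)$. One must also check that $\tau(\psi)$ is a parabolic automorphism, i.e. preserves the filtrations; this holds because $\Sigma$ is a parabolic isomorphism by (\ref{2.2}). In the quaternionic case of Remark \ref{remark: 2.1.1}, $\Sigma$ satisfies $\sigma_X^*\overline{\Sigma}\circ\Sigma=-\mathrm{id}$. The sign cancels in the conjugations, so the same argument applies verbatim.
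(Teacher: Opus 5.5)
Your proof is correct and follows the paper's argument: the paper also defines $\beta([D]) := [\lambda(D)]$ and deduces well-definedness from the compatibility $\lambda(\psi * D) = \tau(\psi) * \lambda(D)$. The paper only asserts that this identity is straightforward, whereas you verify it explicitly by inserting $\Sigma \circ \Sigma^{-1}$, and you also check the involution property and the quaternionic sign cancellation, which the paper treats only briefly in Proposition~\ref{prop:2.2} and Remark~\ref{remark 2.7.1}.
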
 
	
	\begin{proof} 
		It is straight-forward from definition to check that $\lambda(\psi*D)=\tau(\psi)*\lambda(D).$ This shows that the action $\alpha$ is compatible with the involutions $\lambda$ and $\tau$. Hence \[\beta : \mathrm{SL}^p_X(r) \longrightarrow \mathrm{SL}^p_X(r), \qquad [D] \longmapsto [\lambda(D)] \] is well-defined. $\beta$ is an anti-holomorphic involution as $\lambda$ is.
	\end{proof} 
	
	\begin{remark} \label{remark 2.7.1}
	If $\mathcal{L}$ is quaternionic, then $\beta$ remains an anti-holomorphic involution, as $\lambda$ is so by Remark~$\ref{remark 2.2.1}$.
	\end{remark}
	
	\begin{remark} \label{remark 2.7.2}
	As in \eqref{2.12}, there is a well-defined anti-holomorphic involution on the space of $\Gamma$-equivariant $\mathrm{SL}_r$-Opers on $Y$ \( \mathrm{SL}^\Gamma_Y(r) \) induced by $\widetilde{\lambda}$.
	
	\begin{align}\label{(2.7)}
		\widetilde{\beta} : \mathrm{SL}^\Gamma_Y(r) \longrightarrow \mathrm{SL}^\Gamma_Y(r), \qquad [D] \longmapsto [\tilde{\lambda}(D)] \tag{2.13}
	\end{align} 
    \end{remark}

	A parabolic (respectively, equivariant) $\mathrm{SL}(r,\mathbb{C})$-oper is said to be a \emph{real slice} if it is fixed by the involution $\beta$ (respectively, $\widetilde{\beta}$).

	Now we shall see the notion of real slice through anti-holomorphic involution on differential operation induced by the map $\hat{\sigma}^0_1$ on $\mathcal{L}$.
	
	Suppose $\mathcal{L}_*$ denotes the parabolic vector bundle on $\mathcal{L}$ endowed with the parabolic structure induced from $E_*$. Furthermore, we assume that $\mathcal{L}$ is real and $c_{i}=c_{\sigma_{X}({i})}$. Then from Proposition \ref{prop:2.1}, $E_*$ is real parbolic. Hence $\sigma_1^0$ preserves the filtration. From this it is easy to see that $\mathcal{L}_*$ is real parabolic. Moreover, the real parabolic structure is induced from the map $\sigma_1^0$ on $E_*$. So we have the following results in equivariant category. 
	
	\begin{proposition}\label{prop:2.8}
		Let $\mathbb{L}$ be the orbifold bundle corresponding to the parabolic line bundle $\mathcal{L}_*$. Then there exists a $\Gamma$-equivariant isomorphism
		\begin{equation} \label{2.14}
			\widetilde{\theta}^0_{1} : \mathbb{L} \xrightarrow{\sim} \sigma_Y^*\overline{\mathbb{L}} \tag{2.14}
		\end{equation}
		such that
		\[
		\sigma_Y^*\overline{\widetilde{\theta}^0_{1}} \circ \widetilde{\theta}^0_{1}
		=
		\mathrm{id}_{\mathbb{L}}.
		\]
		
	\end{proposition}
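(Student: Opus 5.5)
The plan is to transport the real parabolic structure of $\mathcal{L}_*$ to the orbifold side, using the equivalence between parabolic bundles on $X$ with weights in $\frac{1}{2c_i+1}\mathbb{Z}$ and $\Gamma$-equivariant bundles on $Y$ \cite{mehta1980moduli}. This is the same mechanism that produced $\widetilde{\sigma}^0_1$ in Remark~\ref{corollary 2.2}.

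First I fix the covering. Since $\sigma_X(S)=S$ and $c_i=c_{\sigma_X(i)}$, the ramification data is $\sigma_X$-invariant. I therefore choose the Galois covering $\phi\colon Y\to X$ so that $\sigma_X$ lifts to an anti-holomorphic involution $\sigma_Y$ with $\phi\circ\sigma_Y=\sigma_X\circ\phi$, normalizing $\Gamma$. The paper already uses such a $\sigma_Y$ in \eqref{2.5}, so I take its existence as given.

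Second, I check that the correspondence commutes with $\sigma^*\overline{(\cdot)}$. If a parabolic bundle $W_*$ corresponds to the orbifold bundle $\mathbb{W}$, then $\sigma_X^*\overline{W_*}$, with the conjugated filtration and weights as in Proposition~\ref{prop:1}, corresponds to $\sigma_Y^*\overline{\mathbb{W}}$, with the $\Gamma$-action of \eqref{(1.15)}. The reason is that the orbifold bundle is built locally from $\phi^*W$ by elementary modifications along $\phi^{-1}(S)$ determined by the flags and weights. Both operations are compatible with pulling back by $\sigma_Y$ and conjugating, because $\sigma_Y$ maps $\phi^{-1}(x_i)$ to $\phi^{-1}(\sigma_X(x_i))$ with the same ramification order $2c_i+1$. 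Functoriality of the equivalence also sends parabolic morphisms to $\Gamma$-equivariant morphisms (Lemma~\ref{lemma 1.16}, Corollary~\ref{corollary 1.17}).

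Third, I apply this to the isomorphism $\theta^0_1\colon\mathcal{L}_*\to\sigma_X^*\overline{\mathcal{L}_*}$. This isomorphism is the restriction of $\sigma^0_1$ to the subbundle $\mathcal{L}\subset E$. It is well defined because $\sigma^0_1$ is built from the real structure of $\mathcal{L}$ and therefore preserves $\mathcal{L}$, and it is parabolic because $\sigma^0_1$ preserves the filtrations (Proposition~\ref{prop:2.1}). The correspondence gives a $\Gamma$-equivariant isomorphism $\widetilde{\theta}^0_1\colon\mathbb{L}\to\sigma_Y^*\overline{\mathbb{L}}$, which is the restriction of $\widetilde{\sigma}^0_1$ to $\mathbb{L}\subset\mathcal{V}$ in \eqref{1.17}. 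The identity $\sigma_X^*\overline{\theta^0_1}\circ\theta^0_1=\mathrm{id}$ comes from $\sigma_X^*\overline{\sigma^0_1}\circ\sigma^0_1=\mathrm{id}$ restricted to $\mathcal{L}$. Functoriality, together with the identification $\sigma_Y^*\overline{\sigma_Y^*\overline{\mathbb{L}}}=\mathbb{L}$, then yields $\sigma_Y^*\overline{\widetilde{\theta}^0_1}\circ\widetilde{\theta}^0_1=\mathrm{id}_{\mathbb{L}}$. As an alternative check, $\mathbb{L}$ is a line bundle on a compact $Y$, so this composite is a nonzero constant. The constant is determined on the dense open set $Y\setminus\phi^{-1}(S)$, where $\mathbb{L}=\phi^*\mathcal{L}$ and the composite is $\phi^*$ of the identity, so it equals $1$.

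I expect the main obstacle to be the second step: justifying carefully that the orbifold–parabolic equivalence intertwines $\sigma_X^*\overline{(\cdot)}$ with $\sigma_Y^*\overline{(\cdot)}$, including the compatibility of the lifted $\Gamma$-actions. The first thing I would try is to verify it in local coordinates near each $y\in\phi^{-1}(x_i)$, where $\phi$ is $z\mapsto z^{2c_i+1}$ and $\sigma_Y$ can be taken to be $z\mapsto\bar z$ after suitable normalization.
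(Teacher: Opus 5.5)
Your proposal is correct and follows essentially the paper's route: produce an involutive parabolic isomorphism $\mathcal{L}_*\to\sigma_X^*\overline{\mathcal{L}_*}$, using that $S$ is $\sigma_X$-invariant and that the weights at $x_i$ and $\sigma_X(x_i)$ agree, and then transport it to $Y$ via the parabolic--orbifold equivalence. The differences are minor: the paper starts directly from the real structure $\hat{\sigma}^0_1$ on $\mathcal{L}$ rather than restricting $\sigma^0_1$ (your choice immediately gives $\widetilde{\theta}^0_1=\widetilde{\sigma}^0_1|_{\mathbb{L}}$, which the paper only asserts afterwards in Corollary~\ref{cor: 2.10}), and your explicit handling of $\sigma_Y$ and your constant-on-a-dense-open-set check of involutivity supply details the paper leaves implicit.
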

	\begin{proof}
		There is an isomorphism \[ \hat{\sigma}^0_1 : \mathcal{L} \simeq \sigma^*_X\overline{\mathcal{L}} \] satisfying   $(\sigma^*_X\overline{\hat{\sigma}^0_1}) \circ \hat{\sigma}^0_1 = \text{id}_\mathcal{L}$. $\mathcal{L}_*$ has the parabolic structure induced from $E_*$. Hence \( \sigma^*_X\overline{\mathcal{L}} \) has an induced parabolic structure. Since the divisor is $\sigma_X$ invariant and parabolic weights of $x_i$ and $\sigma_X(x_i)$ are same, we get an isomorphism between the parabolic vector bundle \[\hat{\theta^0_1}: \mathcal{L_*} \simeq \sigma_X^*\overline{\mathcal{L_*}} \] such that $\sigma_X^*\hat{\theta^0_1} \circ \hat{\theta^0_1} = \text{id}.$ The claimed statement follows follows from the categorical equivalence between parabolic bundles on $X$ and $\Gamma$-equivariant bundles on $Y$. 
	\end{proof}
	
	\begin{corollary}\label{cor: 2.9}
		There is a $\Gamma$-equivariant isomorphism 
		\begin{equation} \label{2.15}
			\widetilde{\theta}^m_{n}: J^m(\mathbb{L}^{n}) \simeq \sigma_Y^*\overline{J^m(\mathbb{L}^{n})} \tag{2.15}
		\end{equation}
		such that $\sigma_Y^*\overline{\widetilde{\theta}^m_n} \circ \widetilde{\theta}^m_n = \mathrm{id}.$
	\end{corollary}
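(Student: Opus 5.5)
The plan is to build $\widetilde{\theta}^m_n$ functorially from $\widetilde{\theta}^0_1$ of Proposition~\ref{prop:2.8}, in two steps: first on tensor powers, then on jets.

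\emph{Step 1: tensor powers.} For $n\ge 0$ set $\widetilde{\theta}^0_n := (\widetilde{\theta}^0_1)^{\otimes n}$. This is a map
\[
\mathbb{L}^n \to (\sigma_Y^*\overline{\mathbb{L}})^{\otimes n} \cong \sigma_Y^*\overline{\mathbb{L}^n}.
\]
For $n<0$ use the inverse transpose of $\widetilde{\theta}^0_{-n}$, as in the construction of $\sigma^{\mathcal{L}^*}$ in the proof of Proposition~\ref{prop:2.1}. Both pullback $\sigma_Y^*$ and conjugation commute with tensor products and duals, and the $\Gamma$-action on $\sigma_Y^*\overline{\mathbb{L}}$ is the one from \eqref{(1.15)}. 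So Corollary~\ref{corollary 1.17} shows $\widetilde{\theta}^0_n$ is $\Gamma$-equivariant. The identity $\sigma_Y^*\overline{\widetilde{\theta}^0_n}\circ\widetilde{\theta}^0_n=\mathrm{id}$ follows factorwise from the case $n=1$. In the dual case it follows by taking inverse transposes.

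\emph{Step 2: jets.} The jet functor is natural in two senses. First, a holomorphic isomorphism $f:V\to W$ induces $J^m(f):J^m(V)\to J^m(W)$, functorially. Second, there is a canonical isomorphism
\[
J^m(\sigma_Y^*\overline{V})\cong \sigma_Y^*\overline{J^m(V)}.
\]
To construct it, use the definition (1.9) with the anti-holomorphic involution $\sigma_Y\times\sigma_Y$ of $Y\times Y$. This involution preserves the diagonal $\Delta$, hence the ideal $\mathcal{O}_{Y\times Y}(-(m+1)\Delta)$, and it commutes with both projections $p_1,p_2$. Pulling back and conjugating the quotient $p_2^*V/p_2^*V(-(m+1)\Delta)$ therefore yields the corresponding quotient for $\sigma_Y^*\overline{V}$, and pushing forward gives the isomorphism. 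Now define
\[
\widetilde{\theta}^m_n:= c_V\circ J^m(\widetilde{\theta}^0_n),
\]
where $c_V$ is the canonical isomorphism $J^m(\sigma_Y^*\overline{\mathbb{L}^n})\cong\sigma_Y^*\overline{J^m(\mathbb{L}^n)}$. The $\Gamma$-action on $J^m(\mathbb{L}^n)$ is induced by naturality from that on $\mathbb{L}^n$ together with the diagonal action of $\Gamma$ on $Y\times Y$. Both $J^m(\cdot)$ and $c_V$ are compatible with this action, so $\widetilde{\theta}^m_n$ is $\Gamma$-equivariant by Corollary~\ref{corollary 1.17}.

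\emph{Step 3: the involution identity.} By functoriality of $J^m$ and naturality of $c$,
\[
\sigma_Y^*\overline{\widetilde{\theta}^m_n}\circ\widetilde{\theta}^m_n = J^m\bigl(\sigma_Y^*\overline{\widetilde{\theta}^0_n}\circ\widetilde{\theta}^0_n\bigr)=J^m(\mathrm{id})=\mathrm{id}.
\]
This uses one further fact: applying the canonical identification twice, $c_{\sigma_Y^*\overline V}$ composed with $\sigma_Y^*\overline{c_V}$, gives the identity of $J^m(V)$ under $\sigma_Y^*\overline{\sigma_Y^*\overline V}=V$. That holds because $(\sigma_Y\times\sigma_Y)^2=\mathrm{id}$.

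I expect the main obstacle to be this naturality of $c_V$: checking that it is compatible both with the $\Gamma$-action and with iterating $\sigma_Y^*\overline{(\cdot)}$. I would settle it locally on jets. In a holomorphic coordinate $z$ with $\sigma_Y^*\bar z$ again a coordinate, a jet $\sum_{k\le m} s^{(k)}(z)\,(w-z)^k/k!$ is sent to its conjugate pulled back by $\sigma_Y$. Both compatibilities then reduce to $\sigma_Y^2=\mathrm{id}$ and to the $\Gamma$-action commuting with $\sigma_Y$, the latter being implicit in the setup of Remark~\ref{corollary 2.2}. The same argument gives $-\mathrm{id}$ in the quaternionic case when $n$ is odd, but this is not needed here.
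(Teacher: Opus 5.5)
Your proposal is correct and takes the same approach as the paper, whose proof is only a one-line appeal to the functoriality of jets together with Proposition~\ref{prop:2.8}. Your Steps 1--3 supply what that sentence leaves implicit: the extension to negative powers by inverse transpose, the canonical identification $J^m(\sigma_Y^*\overline{V})\cong\sigma_Y^*\overline{J^m(V)}$ and the check that applying it twice gives the identity, and the assumption, tacit in the paper, that $\sigma_Y$ commutes with the $\Gamma$-action.
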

	\begin{proof}
		It is evident from functorial property of jet and Proposition \ref{prop:2.8}.
	\end{proof}
	
	\begin{corollary}\label{cor: 2.10}
		The anti-holomorphic isomorphism $\widetilde{\sigma}^0_{1}$ induces a $\Gamma$-equivariant isomorphism 
		\begin{equation} \label{2.16}
			(\widetilde{\theta}^*)^0_{1}: \mathbb{L}^* \simeq \sigma_Y^*\overline{\mathbb{L}^*}  \tag{2.16}
		\end{equation}
		such that $\sigma_Y^*\overline{(\widetilde{\theta}^*)^0_{1}} \circ (\widetilde{\theta}^*)^0_{1} = \mathrm{id}.$
	\end{corollary}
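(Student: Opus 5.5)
The plan is to build $(\widetilde{\theta}^*)^0_1$ as the dual, or more precisely the inverse transpose, of the isomorphism $\widetilde{\theta}^0_1$ of Proposition~\ref{prop:2.8}. The statement attributes it to $\widetilde{\sigma}^0_1$. The connection is that the real structure on $\mathcal{L}_*$, and hence on $\mathbb{L}$, is induced from $\sigma^0_1$ on $E_*$: via the exact sequence (\ref{1.17}), $\widetilde{\sigma}^0_1$ restricts to $\widetilde{\theta}^0_1$ on the subbundle $\mathbb{L}\subset\mathcal{V}$. I would therefore first note that $\widetilde{\sigma}^0_1$ preserves $\mathbb{L}$. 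The reason is that $\mathrm{Hom}(\mathbb{L},\sigma_Y^*\overline{\mathbb{L}^*})$ has no $\Gamma$-invariant sections, since the degree is negative, just as $\mathrm{Hom}(\mathcal{L},\mathcal{L}^*(-S))=0$ downstairs. The restriction is then the map $\widetilde{\theta}^0_1$ of (\ref{2.14}).

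Next I would use the canonical identification $\sigma_Y^*\overline{\mathbb{L}^*}\cong(\sigma_Y^*\overline{\mathbb{L}})^*$. Fibrewise this is $\overline{\mathbb{L}^*_{\sigma_Y(y)}}\cong(\overline{\mathbb{L}_{\sigma_Y(y)}})^*$, sending $\overline{f}$ to $\overline{v}\mapsto\overline{f(v)}$. I would check that this identification is holomorphic and $\Gamma$-equivariant for the action (\ref{(1.15)}). With it in hand, I define
\[
(\widetilde{\theta}^*)^0_1 := \bigl((\widetilde{\theta}^0_1)^{t}\bigr)^{-1} : \mathbb{L}^* \longrightarrow (\sigma_Y^*\overline{\mathbb{L}})^* \cong \sigma_Y^*\overline{\mathbb{L}^*},
\]
so that it is characterised by $\langle (\widetilde{\theta}^*)^0_1(f), \widetilde{\theta}^0_1(v)\rangle = \overline{\langle f, v\rangle}$ on the appropriate fibres. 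Holomorphicity is immediate because $\widetilde{\theta}^0_1$ is holomorphic. $\Gamma$-equivariance follows from Corollary~\ref{corollary 1.17} applied to $\widetilde{\theta}^0_1$, combined with the fact that dualising and inverting preserve equivariance; the duality pairing is $\Gamma$-invariant.

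For the cocycle condition, I would apply $\sigma_Y^*\overline{(\cdot)}$ to the defining pairing identity and compose. This gives
\[
\langle \sigma_Y^*\overline{(\widetilde{\theta}^*)^0_1}\circ(\widetilde{\theta}^*)^0_1(f),\ \sigma_Y^*\overline{\widetilde{\theta}^0_1}\circ\widetilde{\theta}^0_1(v)\rangle = \langle f,v\rangle,
\]
using that conjugation twice is the identity and $\sigma_Y^2=\mathrm{id}$. Proposition~\ref{prop:2.8} gives $\sigma_Y^*\overline{\widetilde{\theta}^0_1}\circ\widetilde{\theta}^0_1=\mathrm{id}$, so the composite on $\mathbb{L}^*$ is the identity. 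In the quaternionic case the same computation yields $-\mathrm{id}$, since $(-1)^{-1}=-1$.

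The main obstacle is bookkeeping rather than any deep step. One must verify that the identification $\sigma_Y^*\overline{\mathbb{L}^*}\cong(\sigma_Y^*\overline{\mathbb{L}})^*$ is compatible with applying $\sigma_Y^*\overline{(\cdot)}$ a second time, so that the double-bar identifications cancel canonically. One must also confirm that the induced structure coincides with the one coming from $\widetilde{\sigma}^0_1$ on the quotient $\mathbb{L}^*$ of $\mathcal{V}$ in (\ref{1.17}). For the latter, I would note that $\widetilde{\sigma}^0_1$ descends to the quotient, that $\det\mathcal{V}\cong\mathcal{O}_Y$ identifies the quotient with $\mathbb{L}^*$, and that the two structures can differ only by a constant of modulus one, which I would normalise away.
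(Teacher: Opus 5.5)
Your proof is correct, but it builds the map by a different route from the paper.

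The paper defines $(\widetilde{\theta}^*)^0_1$ as the map induced by $\widetilde{\sigma}^0_1$ on the quotient $\mathcal{V}/\mathbb{L}\cong\mathbb{L}^*$, namely $[v]\mapsto[\widetilde{\sigma}^0_1(v)]$, using that $\widetilde{\sigma}^0_1$ restricts to $\widetilde{\theta}^0_1$ on $\mathbb{L}$. Both $\Gamma$-equivariance and the identity $\sigma_Y^*\overline{(\widetilde{\theta}^*)^0_1}\circ(\widetilde{\theta}^*)^0_1=\mathrm{id}$ then descend at once from the corresponding properties of $\widetilde{\sigma}^0_1$. The square with $\widetilde{\phi}$ also commutes by construction, which is exactly what the proof of Theorem \ref{thm 2.11} invokes for (\ref{2.19}).

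You instead take the inverse transpose of $\widetilde{\theta}^0_1$ and get the cocycle identity from functoriality of inverse transposition. This buys two things:
\begin{itemize}
\item compatibility with the pairing $\mathbb{L}\otimes\mathbb{L}^*\to\mathcal{O}_Y$, which is the natural structure on negative powers $\mathbb{L}^n$ when, as in Corollary \ref{cor: 2.9}, only Proposition \ref{prop:2.8} is available;
\item a justification, via your degree argument, that $\widetilde{\sigma}^0_1$ preserves $\mathbb{L}$, which the paper merely asserts. That argument needs $\deg\mathbb{L}>0$, i.e.\ $g_Y\ge 2$.
\end{itemize}

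The cost is that your map and the quotient map are both isomorphisms of line bundles on compact $Y$ satisfying the cocycle identity. Hence they differ by a constant $c$ with $|c|=1$. This $c$ is precisely the factor by which $\det\widetilde{\sigma}^0_1$ differs from the standard real structure on $\det\mathcal{V}\cong\mathcal{O}_Y$. Rescaling your map by $c$ preserves the cocycle identity, but it just reproduces the paper's map. So for the corollary as stated, the quotient construction is the shorter path.

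What your version usefully exposes is that the paper's later combination of (\ref{2.19}) with a jet-functorial $\widetilde{\theta}^{r-1}_{1-r}$ built from the dual structure tacitly requires $c^{r-1}=1$.
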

	\begin{proof}
		From Proposition \ref{prop:2.8}, it is evident that ${\widetilde{\theta}_1^0}$ is the restriction of $\widetilde{\sigma}_1^0$ on $\mathbb{L}$. 
		\[
		\xymatrix{
			0 \ar[r] 
			& \mathbb{L} \ar[r] \ar[d]^{\widetilde{\theta}^0_1}
			& \mathcal{V} \ar[r]^{\widetilde{\phi}} \ar[d]^{\widetilde{\sigma}_1^0}
			& \mathbb{L}^* \ar[r] 
			& 0 \\
			0 \ar[r] 
			& \sigma_Y^*\overline{\mathbb{L}} \ar[r] 
			& \sigma_Y^*\overline{\mathcal{V}} \ar[r]
			& \sigma_Y^*\overline{\mathbb{L}^*} \ar[r]
			& 0
		}
		\]
		Since $\widetilde{\sigma}_1^0$ preserves $\mathbb{L}$. It induces a well-defined map on the quotient $\mathbb{L}^*$. Explicitly,
		\[ (\widetilde{\theta}^*)^0_{1}: \mathbb{L}^* \longrightarrow \sigma_Y^*\overline{\mathbb{L}^*},\quad [v] \mapsto [\widetilde{\sigma}_1^0(v)].\] Since ${\widetilde{\theta}_1^0}$ and $\widetilde{\sigma}^0_1$ is $\Gamma$-equivariant isomorphism, $(\widetilde{\theta}^*)^0_{1}$ is so. 
	\end{proof}
	
	\begin{proposition} \label{prop:2.11}
		There exists a natural conjugate-linear involution
		\begin{align} \label{eq:2.9}
			\tilde{\mathcal{B}} : H^0(Y, \mathrm{Diff}^r_Y(\mathbb{L}^{1-r}, \mathbb{L}^{r+1}))^\Gamma \longrightarrow H^0(Y, \mathrm{Diff}^r_Y(\mathbb{L}^{1-r}, \mathbb{L}^{r+1}))^\Gamma \tag{2.17} \\
			\widetilde{\delta} \longmapsto \widetilde{\mathcal{B}}(\widetilde{\delta}) 
			:= (\widetilde{\theta}^0_{1+r})^{-1} \;\circ\; 
			\sigma^*_Y \overline{\widetilde{\delta}} \;\circ\;
			\widetilde{\theta}^r_{1-r}. \notag
		\end{align}
		induced by the anti-holomorphic isomorphism on $\mathcal{L}_*$. 
	\end{proposition}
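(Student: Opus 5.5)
The plan is to check three things in turn: that $\widetilde{\mathcal{B}}(\widetilde{\delta})$ is again a holomorphic $\Gamma$-equivariant differential operator of order $r$ from $\mathbb{L}^{1-r}$ to $\mathbb{L}^{r+1}$, that $\widetilde{\mathcal{B}}$ is conjugate-linear, and that $\widetilde{\mathcal{B}}^2=\mathrm{id}$.

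\textbf{Well-definedness.} By (\ref{1.16}), $\widetilde{\delta}$ is a $\Gamma$-invariant holomorphic homomorphism $J^r(\mathbb{L}^{1-r})\to\mathbb{L}^{r+1}$. Since $\sigma_Y$ is anti-holomorphic, the conjugate pullback $\sigma_Y^*\overline{\widetilde{\delta}}$ is a holomorphic homomorphism
\[
\sigma_Y^*\overline{J^r(\mathbb{L}^{1-r})}\longrightarrow\sigma_Y^*\overline{\mathbb{L}^{r+1}}.
\]
By Corollary \ref{corollary 1.17}, this homomorphism is $\Gamma$-equivariant for the induced actions (\ref{(1.15)}). Here I will need that $\sigma_Y$ normalises the $\Gamma$-action, so that these induced actions make sense. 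I take this to be part of the choice of $Y$ compatible with $\sigma_X$, as already used implicitly in Remark \ref{corollary 2.2}.

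Next, Corollary \ref{cor: 2.9} supplies the $\Gamma$-equivariant isomorphisms
\[
\widetilde{\theta}^r_{1-r}:J^r(\mathbb{L}^{1-r})\to\sigma_Y^*\overline{J^r(\mathbb{L}^{1-r})},\qquad \widetilde{\theta}^0_{1+r}:\mathbb{L}^{1+r}\to\sigma_Y^*\overline{\mathbb{L}^{1+r}}.
\]
For $\widetilde{\theta}^0_{1+r}$ I use the $m=0$ case, built from tensor powers of $\widetilde{\theta}^0_1$ in Proposition \ref{prop:2.8}; for negative powers I use the dual isomorphism of Corollary \ref{cor: 2.10}. Composing these with $\sigma_Y^*\overline{\widetilde{\delta}}$ gives a holomorphic, $\Gamma$-invariant element of $\mathrm{Hom}(J^r(\mathbb{L}^{1-r}),\mathbb{L}^{1+r})$, hence an element of the target space.

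\textbf{Conjugate-linearity.} This is immediate, since $\overline{c\,\widetilde{\delta}}=\bar c\,\overline{\widetilde{\delta}}$ and all the other maps in the composition are linear.

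\textbf{Involution.} Write $\widetilde{\mathcal{B}}^2(\widetilde{\delta})$ out and use $\sigma_Y^*\overline{A\circ B}=\sigma_Y^*\overline{A}\circ\sigma_Y^*\overline{B}$ together with $\sigma_Y^*\overline{\sigma_Y^*\overline{\widetilde{\delta}}}=\widetilde{\delta}$, which holds because $\sigma_Y^2=\mathrm{id}$. This gives
\[
\widetilde{\mathcal{B}}^2(\widetilde{\delta})=\bigl(\sigma_Y^*\overline{\widetilde{\theta}^0_{1+r}}\circ\widetilde{\theta}^0_{1+r}\bigr)^{-1}\circ\widetilde{\delta}\circ\bigl(\sigma_Y^*\overline{\widetilde{\theta}^r_{1-r}}\circ\widetilde{\theta}^r_{1-r}\bigr).
\]
Both bracketed factors are the identity by Corollary \ref{cor: 2.9}, so $\widetilde{\mathcal{B}}^2=\mathrm{id}$.

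In the quaternionic case each factor equals $(-1)^{n}$ for the relevant power $n$ of $\mathbb{L}$: $1+r$ for the first, $1-r$ for the second. Since $(1+r)+(1-r)=2$ is even, the signs cancel and $\widetilde{\mathcal{B}}$ is still an involution.

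The main obstacle is justifying that the jet isomorphism $\widetilde{\theta}^r_{1-r}$ is really obtained functorially from $\widetilde{\theta}^0_{1-r}$. Concretely, I must check that $J^r(\sigma_Y^*\overline{W})\cong\sigma_Y^*\overline{J^r(W)}$ canonically, so that the cocycle condition transfers. I would prove this from definition (1.9) by pulling back along $\sigma_Y\times\sigma_Y$, which preserves the diagonal $\Delta$ and its powers $\mathcal{O}(-(k+1)\Delta)$, and then conjugating.
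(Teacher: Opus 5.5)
Your proposal is correct and takes the same approach as the paper. The paper's proof only draws the commutative square and cites Corollary~\ref{cor: 2.9} together with Lemmas~\ref{lemma 1.15} and~\ref{lemma 1.16}; you additionally spell out the $\Gamma$-equivariance of $\sigma_Y^*\overline{\widetilde{\delta}}$, the computation of $\widetilde{\mathcal{B}}^2$ from the cocycle identities, and the sign cancellation $(-1)^{(1+r)+(1-r)}=1$, which the paper merely asserts for the quaternionic case in Remark~\ref{remark 2.12.1}.
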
 
	\begin{proof}
		We have the commutative diagram
		\[
		\xymatrix{
			J^r(\mathbb{L}^{1-r}) 
			\ar[d]_{\widetilde{\theta}^r_{1-r}} 
			\ar[r]^{\;\;\widetilde{\mathcal{B}}(\widetilde{\delta})} 
			& \mathbb{L}^{1+r} 
			\ar[d]^{\widetilde{\theta}^0_{1+r}} \\
			\sigma^*_Y \overline{J^r(\mathbb{L}^{1-r})} 
			\ar[r]_{\;\;\sigma^*_Y \overline{\widetilde{\delta}}} 
			& \sigma^*_Y \overline{\mathbb{L}^{1+r}}
		}
		\]  
		Proof is similar in spirit and follows from corollary \ref{cor: 2.9}, Lemma \ref{lemma 1.15} and Lemma \ref{lemma 1.16}. 
	\end{proof}
	
	\begin{corollary} \label{corollary 2.12.1}
		Hence we have a natural conjugate-linear involution 
		\begin{align} \label{eq:2.18}
			\mathcal{B} : H^0(X, \mathrm{Diff}^r_X(\mathcal{L}^{1-r}_*, \mathcal{L}^{r+1}_*)) \longrightarrow H^0(X, \mathrm{Diff}^r_X(\mathcal{L}^{1-r}_*, \mathcal{L}^{r+1}_*)) \tag{2.18}
		\end{align}
	\end{corollary}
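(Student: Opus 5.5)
The plan is to obtain $\mathcal{B}$ by transporting the involution $\widetilde{\mathcal{B}}$ of Proposition~\ref{prop:2.11} through the identification \eqref{1.16}. By definition,
\[
H^0\bigl(X,\mathrm{Diff}^r_X(\mathcal{L}^{1-r}_*,\mathcal{L}^{r+1}_*)\bigr)
= H^0\bigl(Y,\mathrm{Diff}^r_Y(\mathbb{L}^{1-r},\mathbb{L}^{r+1})\bigr)^\Gamma,
\]
because $\mathbb{L}^{1-r}$ and $\mathbb{L}^{1+r}$ are the orbifold bundles attached to $\mathcal{L}^{1-r}_*$ and $\mathcal{L}^{1+r}_*$. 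The functor from parabolic bundles to orbifold bundles is compatible with tensor powers and duals, so this identification holds. I therefore define $\mathcal{B}$ to be $\widetilde{\mathcal{B}}$ under this identification, and most of the work is checking that $\widetilde{\mathcal{B}}$ really is a well-defined conjugate-linear involution on the $\Gamma$-invariant part.

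I would carry out the checks in this order. First, well-definedness and $\Gamma$-equivariance. Take $\widetilde{\delta}$ $\Gamma$-equivariant. Lemma~\ref{lemma 1.15}, Lemma~\ref{lemma 1.16} and Corollary~\ref{corollary 1.17} show that $\sigma_Y^*\overline{\widetilde{\delta}}$ is a $\Gamma$-equivariant holomorphic map $\sigma_Y^*\overline{J^r(\mathbb{L}^{1-r})}\to\sigma_Y^*\overline{\mathbb{L}^{1+r}}$; it is holomorphic because both conjugation and pullback by the anti-holomorphic $\sigma_Y$ are applied. Corollary~\ref{cor: 2.9} gives the $\Gamma$-equivariant isomorphisms $\widetilde{\theta}^r_{1-r}$ and $\widetilde{\theta}^0_{1+r}$, so the composite $(\widetilde{\theta}^0_{1+r})^{-1}\circ\sigma_Y^*\overline{\widetilde{\delta}}\circ\widetilde{\theta}^r_{1-r}$ is again a $\Gamma$-invariant section of $\mathrm{Hom}(J^r(\mathbb{L}^{1-r}),\mathbb{L}^{1+r})$. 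Second, conjugate-linearity. This is immediate, since $\overline{c\,\widetilde{\delta}}=\bar c\,\overline{\widetilde{\delta}}$ and the other factors are fixed linear isomorphisms.

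The third check, that $\widetilde{\mathcal{B}}$ is an involution, is the step needing care. Expanding $\widetilde{\mathcal{B}}^2(\widetilde{\delta})$ and using $\sigma_Y^2=\mathrm{id}$, I get
\[
\widetilde{\mathcal{B}}^2(\widetilde{\delta})=\bigl(\sigma_Y^*\overline{\widetilde{\theta}^0_{1+r}}\circ\widetilde{\theta}^0_{1+r}\bigr)^{-1}\circ\widetilde{\delta}\circ\bigl(\sigma_Y^*\overline{\widetilde{\theta}^r_{1-r}}\circ\widetilde{\theta}^r_{1-r}\bigr).
\]
In the real case both brackets are the identity by Corollary~\ref{cor: 2.9}, so $\widetilde{\mathcal{B}}^2=\mathrm{id}$.

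The main obstacle is the quaternionic case and the compatibility of the two $\theta$'s. If $\widetilde{\theta}^0_1$ squares to $-\mathrm{id}$, the induced map on $\mathbb{L}^n$ squares to $(-1)^n$. The same holds on $J^m(\mathbb{L}^n)$, by functoriality of jets, since the jet functor is additive on such isomorphisms. Then $n=1-r$ and $n=1+r$ have the same parity, so the two signs $(-1)^{1+r}$ and $(-1)^{1-r}$ cancel and $\widetilde{\mathcal{B}}^2=\mathrm{id}$ holds in both cases. I would also check that the isomorphisms on $\mathbb{L}^{1\pm r}$ are built from the same $\widetilde{\theta}^0_1$ of Proposition~\ref{prop:2.8}, i.e. from the restriction of $\widetilde{\sigma}^0_1$ (Corollary~\ref{cor: 2.10}), and that the isomorphism $\mathbb{L}^{1-r}\otimes K_Y^r\cong\mathbb{L}^{1+r}$ intertwines them. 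Finally, transporting through \eqref{1.16} yields $\mathcal{B}$ with the same properties.
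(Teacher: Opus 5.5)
Your proposal is correct and follows the paper's route. The paper's proof is the one-line observation that $\mathcal{B}$ is $\widetilde{\mathcal{B}}$ from Proposition~\ref{prop:2.11} transported through the identification \eqref{1.16}, with well-definedness and the involution property left to Proposition~\ref{prop:2.11} (via Corollary~\ref{cor: 2.9} and Lemmas~\ref{lemma 1.15}, \ref{lemma 1.16}); your explicit formula for $\widetilde{\mathcal{B}}^2$ and your sign count $(-1)^{1+r}(-1)^{1-r}=1$ for the quaternionic case spell out what the paper leaves to Proposition~\ref{prop:2.11} and Remark~\ref{remark 2.12.1}.
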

	\begin{proof}
		Proof follows from $\eqref{1.16}$.
	\end{proof}
	
	\begin{remark} \label{remark 2.12.1}
	If $\mathcal{L}$ is quaternionic, $\widetilde{\mathcal{B}}$ and $\mathcal{B}$ is still a conjuagte linear involution.
	\end{remark}
	
	\begin{theorem}\label{thm 2.11}
		The involution $\widetilde{\mathcal{B}}$ defined in Proposition \ref{prop:2.11} coincides with the involution $\widetilde{\beta}$ defined in (\ref{(2.7)}) on space of $\Gamma$-equiavriant $\mathrm{SL}(r,\mathbb{C})$-opers on Y. In other words, the following diagram commutes-
		\[
		\xymatrix
		{ 
			\mathrm{SL}_Y^\Gamma(r) \ar[r]^{\widetilde{\beta}} \ar[d]^{\widetilde{\psi}}
			& \mathrm{SL}_Y^\Gamma(r) \ar[d]^{\widetilde{\psi}}\\
			\widetilde{\mathcal{U}} \ar[r]^{\widetilde{\mathcal{B}}} 
			& \widetilde{\mathcal{U}}
		}
		\]
	\end{theorem}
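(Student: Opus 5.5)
The plan is to show that the bijection $\widetilde{\psi}$ of Theorem \ref{thm 1.18}, $D \mapsto \widetilde{\delta}_D$, is natural with respect to conjugation-pullback. Concretely, I will prove
\[
\widetilde{\delta}_{\widetilde{\lambda}(D)} = \widetilde{\mathcal{B}}(\widetilde{\delta}_D)
\]
for every $\Gamma$-equivariant $\mathrm{SL}(r,\mathbb{C})$-connection $D$ on $\mathrm{sym}^{r-1}\mathcal{V}$. Since $\widetilde{\beta}[D]=[\widetilde{\lambda}(D)]$, this gives commutativity of the diagram. Well-definedness on classes is already handled: $\widetilde{\lambda}(\psi*D)=\widetilde{\tau}(\psi)*\widetilde{\lambda}(D)$, as in Proposition \ref{prop:2.6}, and equivalent connections give the same operator by Theorem \ref{thm 1.18}.

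The proof follows the construction of $\widetilde{\delta}_D$ step by step.

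\textbf{Step 1.} Show that the projection $\psi:\mathrm{sym}^{r-1}\mathcal{V}\to\mathbb{L}^{1-r}$ of (\ref{(1.18)}) intertwines $\mathrm{sym}^{r-1}\widetilde{\sigma}^0_1$ with $\widetilde{\theta}^0_{1-r}$. That is,
\[
\sigma_Y^*\overline{\psi}\circ \mathrm{sym}^{r-1}\widetilde{\sigma}^0_1=\widetilde{\theta}^0_{1-r}\circ\psi.
\]
This follows from the diagram in Corollary \ref{cor: 2.10}, which says $\widetilde{\sigma}^0_1$ preserves $\mathbb{L}$ and induces $(\widetilde{\theta}^*)^0_1$ on $\mathbb{L}^*$, after taking symmetric powers.

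\textbf{Step 2.} Show that the maps $\widetilde{\psi}_j:\mathrm{sym}^{r-1}\mathcal{V}\to J^j(\mathbb{L}^{1-r})$ satisfy
\[
\widetilde{\psi}_j^{\lambda(D)} = (\widetilde{\theta}^j_{1-r})^{-1}\circ\sigma_Y^*\overline{\widetilde{\psi}_j^{D}}\circ \mathrm{sym}^{r-1}\widetilde{\sigma}^0_1 .
\]
These maps are built from $D$ and $\psi$: a vector $v$ is sent to the $j$-jet of $\psi(s)$, where $s$ is the $D$-flat local section with $s(y)=v$. The isomorphisms $\widetilde{\theta}^j_{1-r}$ of Corollary \ref{cor: 2.9} are obtained by functoriality of jets from $\widetilde{\theta}^0_{1-r}$. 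So the identity reduces to two facts: conjugate-pullback commutes with taking jets, and $s$ is $D$-flat if and only if $(\mathrm{sym}^{r-1}\widetilde{\sigma}^0_1)^{-1}(\sigma_Y^*\overline{s})$ is $\widetilde{\lambda}(D)$-flat. The second fact is immediate from the definition of $\widetilde{\lambda}$ in Remark \ref{remark 2.2.2}.

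\textbf{Step 3.} Conclude that the splitting $\widetilde{\psi}_r\circ\widetilde{\psi}_{r-1}^{-1}$ of (\ref{(1.19)}) transforms by conjugation with the $\widetilde{\theta}$'s. For this I need that $\widetilde{\theta}^r_{1-r}$ restricts on the subbundle $\mathbb{L}^{1-r}\otimes K_Y^r=\mathbb{L}^{1+r}$ to $\widetilde{\theta}^0_{1+r}$, and that it is compatible with the projection to $J^{r-1}$. Since $\widetilde{\delta}_D$ is the projection onto $\mathbb{L}^{1+r}$ determined by the splitting, this yields
\[
\widetilde{\delta}_{\widetilde{\lambda}(D)}=(\widetilde{\theta}^0_{1+r})^{-1}\circ\sigma_Y^*\overline{\widetilde{\delta}_D}\circ\widetilde{\theta}^r_{1-r}=\widetilde{\mathcal{B}}(\widetilde{\delta}_D).
\]
I also check that $\widetilde{\mathcal{B}}$ preserves $\widetilde{\mathcal{U}}$: principal symbol $1$ is preserved by that same compatibility on $\mathbb{L}^{1+r}$, and vanishing subprincipal symbol then follows because the image operator corresponds to an $\mathrm{SL}(r,\mathbb{C})$-connection.

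The main obstacle is the identification in Step 3, namely that on $\mathbb{L}^{1+r}\subset J^r(\mathbb{L}^{1-r})$ the jet-induced map $\widetilde{\theta}^r_{1-r}$ equals $\widetilde{\theta}^0_{1+r}$. Here $\widetilde{\theta}^0_{1+r}$ is defined via $\mathbb{L}^2\cong K_Y$, while on $K_Y$ the natural conjugation is $\omega\mapsto\overline{\sigma_Y^*\omega}$. So the identification requires that the real structure on $\mathbb{L}$ squares to the canonical real structure on $K_Y$. That holds for a real theta characteristic, and in the quaternionic case the sign $(-1)^2$ cancels; I would fix this normalization first. In the quaternionic case both $\mathrm{sym}^{r-1}\widetilde{\sigma}^0_1$ and $\widetilde{\theta}$ may pick up signs, but these appear on both sides of each conjugation and cancel.
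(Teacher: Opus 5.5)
Your proof is correct, but it reaches the conclusion by a different mechanism from the paper's.

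\textbf{Shared ingredient.} Both arguments rest on the same intertwining relation. Your Steps 1--2 at $j=r-1$ are the paper's (2.19). Your version, with $\widetilde{\psi}^{\widetilde{\lambda}(D)}_{j}$ on one side and $\sigma_Y^*\overline{\widetilde{\psi}^{D}_{j}}$ on the other, is the precise form; the paper leaves the dependence on the connection implicit.

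\textbf{Where the routes split.} The paper never transports $\widetilde{\psi}_r$ or the splitting. It uses (2.20): the local solutions of $\widetilde{\delta}_{[D]}$ are the sections whose $(r-1)$-jets are flat for the transported connection on $J^{r-1}(\mathbb{L}^{1-r})$. It then shows that $\widetilde{\delta}_{[\widetilde{\beta}(D)]}$ and $\widetilde{\mathcal{B}}(\widetilde{\delta}_{[D]})$ have the same solutions, and invokes uniqueness. You instead push the splitting $\widetilde{\psi}_r\circ\widetilde{\psi}_{r-1}^{-1}$ through the conjugation and read off $\widetilde{\delta}$ as the associated projection. This gives the identity exactly, not just up to a factor.

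\textbf{What each buys.} The kernel route is lighter: it needs only level $r-1$ and never touches the symbol line. However, equal solution sheaves determine an order-$r$ operator only up to its principal symbol, which here is a constant. So the paper's uniqueness step silently needs $\widetilde{\mathcal{B}}$ to preserve principal symbol $1$. That is exactly the normalization your Step 3 makes explicit.

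\textbf{A refinement to your normalization.} The condition you actually need is that $\widetilde{\theta}^r_{1-r}$ restricts to $\widetilde{\theta}^0_{1+r}$ on $\mathbb{L}^{1-r}\otimes K_Y^r$. Step 1 builds $\widetilde{\theta}^0_{1-r}$ from the quotient map of Corollary~\ref{cor: 2.10}. So this condition also involves comparing that quotient map with the dual of $\widetilde{\theta}^0_1$ (equivalently, $\det\widetilde{\sigma}^0_1$), not only the $K_Y$ condition you state. Both discrepancies are constants of modulus one. A single unit-modulus rescaling of $\widetilde{\sigma}^0_1$ makes the restriction condition hold, and it changes neither involutivity nor $\widetilde{\lambda}$. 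Alternatively, you can simply define $\widetilde{\theta}^0_{1+r}$ to be that restriction.

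Your handling of the subprincipal symbol, via the bijection of Theorem~\ref{thm 1.18} applied to $\widetilde{\lambda}(D)$, is fine.
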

	
	\begin{proof}
		We first prove the compatibility of real structures. 
		
		By construction of $(\widetilde{\theta}^*)^0_1$, the following diagram
		\[
		\xymatrix
		{
			\mathcal{V} \ar[r]^{\widetilde{\phi}} \ar[d]^{\widetilde{\sigma}^0_1}
			& \mathbb{L}^* \ar[d]^{(\widetilde{\theta}^*)^0_{1}} \\
			\sigma_Y^*\overline{\mathcal{V}} \ar[r]^{\sigma_Y^*\overline{\widetilde{\phi}}}
			& \sigma_Y^*\overline{\mathbb{L}^*}
		}
		\]
		commutes. By functoriality of symmetric powers and jet bundles, this implies 
		\begin{equation}\label{2.19}
			\widetilde{\theta}^{r-1}_{1-r} \circ \widetilde{\psi}_{r-1} = \sigma_Y^*\overline{\widetilde{\psi}_{r-1}} \circ \mathrm{sym}^{r-1}{\widetilde{\sigma}^0_1}.\tag{2.19}
		\end{equation}
		Hence we have the commutative diagram
		\[
		\xymatrix
		{
			\mathrm{sym}^{r-1}(\mathcal{V}) \ar[r]^{\widetilde{\psi}_{r-1}} \ar[d]^{\mathrm{sym}^{r-1}{\widetilde{\sigma}^0_1}}
			& J^{r-1}(\mathbb{L}^{1-r}) \ar[d]^{\widetilde{\theta}^{r-1}_{1-r}} \\
			\sigma_Y^*\overline{\mathrm{sym}^{r-1}(\mathcal{V})} \ar[r]^{\sigma_Y^*\overline{\widetilde{\psi}_{r-1}}}
			& \sigma_Y^*\overline{J^{r-1}(\mathbb{L}^{1-r})}
		}
		\]

		Since principal and sub-principal symbols commute with pull-back by $\sigma_Y$ and complex conjugation, the map $\widetilde{\mathcal{B}}$ preserves $\widetilde{\mathcal{U}}$.
		
		\medskip
		
		Let $D$ be a $\Gamma$-equivariant oper. By Theorem \ref{thm 1.18}, the unique flat vector bundle given by the sheaf of solution of $\widetilde{\delta}_{[D]} \in \mathrm{Diff}^r(\mathbb{L}^{1-r},\mathbb{L}^{1+r})$ with principal symbol $1$, is isomorphic to $J^{r-1}(\mathbb{L}^{1-r})$ equipped with $(\widetilde{\psi}_{r-1}^{-1})^*(D)$, see \cite[Section 4]{MR2036690}.
		
		Hence for any local section $v$ of $\mathbb{L}^{1-r}$,
		\begin{equation} \label{2.20}
			\widetilde{\delta}_{[D]}(j^rv)=0
			\quad \Longleftrightarrow \quad
			(\widetilde{\psi}_{r-1}^{-1})^*(D)(j^{r-1}v)=0. \tag{2.20}
		\end{equation}
		
		Therefore, to prove
		\[
		\widetilde{\psi}([\widetilde{\beta}(D)])
		=
		\widetilde{\mathcal B}(\widetilde{\delta}_{[D]}),
		\]
		it suffices to show that the two operators induce the same connection on $J^{r-1}(\mathbb{L}^{1-r})$.
		
		\medskip
		
		By definition,
		\[
		\widetilde{\beta}(D)
		= \mathrm{sym}^{r-1}(\widetilde{\sigma}^0_1)^*
		(\sigma_Y^*\overline{D}).
		\] 
		where $\mathrm{sym}^{r-1}(\widetilde{\sigma}^0_1)^* (\sigma_Y^*\overline{D})$ denotes the pull-back of $\sigma_Y^*\overline{D}$ by the isomorphism $\mathrm{sym}^{r-1}(\widetilde{\sigma}_1^0)$, defined in Remark \ref{remark 2.7.2}.
		
		Using (\ref{2.20}), $\widetilde{\delta}_{[\widetilde{\beta}(D)]}(j^rv)=0$ if and only if 
		\[ (\sigma_Y^*\overline{D}) \Big( \big( \mathrm{sym}^{r-1}(\widetilde{\sigma}^0_1) \circ \widetilde{\psi}_{r-1}^{-1} \big)(j^{r-1}v) \Big)=0. \]
		Using (\ref{2.19}), this is equivalent to
		\[
		(\sigma_Y^*\overline{D})
		\Big(
		\big((\sigma_Y^*\overline{\widetilde{\psi}_{r-1}})^{-1} \circ
		\widetilde{\theta}^{\,r-1}_{1-r}\big)
		(j^{r-1}v)
		\Big)=0.
		\]
		
		Since jet bundles commute with pullback by $\sigma_Y$ and conjugation, 
		\begin{align} \label{2.21}
			\widetilde{\theta}^r_{1-r}(j^rv)=j^r(\widetilde{\theta}^0_{1-r}(v)). \tag{2.21}
		\end{align}
		Hence the above condition is equivalent to
		\begin{align} \label{2.22}
			(D \circ \sigma_Y^*\overline{\widetilde{\psi}_{r-1}})
			\big(
			j^{r-1}(\sigma_Y^*\overline{\widetilde{\theta}^0_{1-r}(v)})
			\big)
			=0. \tag{2.22}
		\end{align}
		
		On the other hand,
		\begin{align*}
			\widetilde{\mathcal{B}} (\widetilde{\delta}_{[D]})= (\widetilde{\theta}^0_{1+r})^{-1} \circ \sigma_Y^* \overline{\widetilde{\delta}}_{[D]} \circ \widetilde{\theta}^r_{1-r}
		\end{align*}
		
		Thus $\widetilde{\mathcal{B}} (\widetilde{\delta}_{[D]})(v)=0$ if and only if
		\[ \widetilde{\delta}_{[D]}(\sigma_Y^*\overline{\widetilde{\theta}^r_{1-r}(j^rv)})=0.\]
		Using compatibility of jets, this becomes 
		\[\widetilde{\delta}_{[D]}(j^r\sigma_Y^*\overline{(\widetilde{\theta}^0_{1-r}v}))=0. \]
		By ($\ref{2.20}$), this is equivalent to 
		\begin{equation}\label{2.23}
			(D \circ \sigma_Y^*\overline{\widetilde{\psi}_{r-1}})
			\big(
			j^{r-1}(\sigma_Y^*\overline{\widetilde{\theta}^0_{1-r}(v)})
			\big)
			=0. \tag{2.23}
		\end{equation}
		The right-hand sides of (\ref{2.22}) and (\ref{2.23}) coincide.
		By uniqueness in Theorem~\ref{thm 1.18}, they are equal.
		This proves the commutativity of the diagram.
	\end{proof}

	\begin{corollary}
			The involution $\mathcal{B}$ defined in Corollary $\ref{corollary 2.12.1}$ coincides with the involution $\beta$ defined in Proposition $\ref{prop:2.6}$ on space of parabolic $\mathrm{SL}(r,\mathbb{C})$-opers on $X$.
	\end{corollary}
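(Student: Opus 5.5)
The plan is to deduce the corollary from Theorem~\ref{thm 2.11} by transporting everything along the categorical equivalence between parabolic bundles on $X$ with weights in $\frac{1}{2c_i+1}\mathbb{Z}$ and $\Gamma$-equivariant bundles on $Y$. Three identifications are needed; the only nontrivial one is that $\sigma_X$ lifts to $Y$.

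\emph{First identification: opers.} Under the equivalence, $\mathrm{sym}^{r-1}(E_*)$ corresponds to $\mathrm{sym}^{r-1}\mathcal{V}$, and parabolic connections correspond to $\Gamma$-equivariant connections. Parabolic automorphisms correspond to $\Gamma$-equivariant automorphisms. This gives a bijection $\Phi_C:\mathcal{C}_X^p(r)\to\mathcal{C}_Y^\Gamma(r)$ that is equivariant for the actions of Proposition~\ref{prop:2.5} and Remark~\ref{remark 2.5.1}, so it descends to a bijection $\Phi:\mathrm{SL}^p_X(r)\to\mathrm{SL}^\Gamma_Y(r)$.

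\emph{Second identification: differential operators.} By \eqref{1.16}, $H^0(X,\mathrm{Diff}^r_X(\mathcal{L}^{1-r}_*,\mathcal{L}^{r+1}_*))$ is identified with $H^0(Y,\mathrm{Diff}^r_Y(\mathbb{L}^{1-r},\mathbb{L}^{r+1}))^\Gamma$. Corollary~\ref{corollary 1.19} is obtained from Theorem~\ref{thm 1.18} through exactly these identifications. So the bijection $\mathrm{SL}^p_X(r)\to\mathcal{U}$ equals $\Phi_{\mathrm{Diff}}^{-1}\circ\widetilde{\psi}\circ\Phi$, where $\Phi_{\mathrm{Diff}}$ denotes the identification \eqref{1.16} restricted to $\mathcal{U}$.

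\emph{Third identification: the involutions.} This step is the crux. I need $\sigma_Y$ to be an anti-holomorphic lift of $\sigma_X$ that normalizes $\Gamma$; this is implicitly assumed in Remark~\ref{corollary 2.2}, and I would state it explicitly. With such a lift, the equivalence commutes with the functor $V_*\mapsto\sigma_X^*\overline{V_*}$, sending it to $\mathcal{W}\mapsto\sigma_Y^*\overline{\mathcal{W}}$. Because the weights at $x_i$ and $\sigma_X(x_i)$ agree, via $c_i=c_{\sigma_X(i)}$, the conjugated bundle again has weights in the allowed lattice. Under this compatibility:
\begin{itemize}
\item $\sigma_1^0$ corresponds to $\widetilde{\sigma}^0_1$ (Remark~\ref{corollary 2.2}), hence $\lambda$ corresponds to $\widetilde{\lambda}$, $\tau$ to $\widetilde{\tau}$, and $\beta$ to $\widetilde{\beta}$. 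That is, $\Phi\circ\beta=\widetilde{\beta}\circ\Phi$.
\item The parabolic isomorphism $\hat{\theta}^0_1$ on $\mathcal{L}_*$ corresponds to $\widetilde{\theta}^0_1$ (Proposition~\ref{prop:2.8}). By functoriality of jets and tensor powers, the same holds for $\widetilde{\theta}^r_{1-r}$ and $\widetilde{\theta}^0_{1+r}$. Since $\mathcal{B}$ is defined in Corollary~\ref{corollary 2.12.1} precisely by transporting $\widetilde{\mathcal{B}}$ through \eqref{1.16}, we get $\Phi_{\mathrm{Diff}}\circ\mathcal{B}=\widetilde{\mathcal{B}}\circ\Phi_{\mathrm{Diff}}$.
\end{itemize}

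With these in place the argument is a diagram chase. Theorem~\ref{thm 2.11} gives $\widetilde{\psi}\circ\widetilde{\beta}=\widetilde{\mathcal{B}}\circ\widetilde{\psi}$ on $\mathrm{SL}^\Gamma_Y(r)$. Conjugating by $\Phi$ and $\Phi_{\mathrm{Diff}}$ yields that the bijection of Corollary~\ref{corollary 1.19} intertwines $\beta$ and $\mathcal{B}$. In particular, $\mathcal{B}$ preserves $\mathcal{U}$, and real slices correspond to $\mathcal{B}$-fixed operators. The quaternionic case goes the same way, using Remarks~\ref{remark 2.7.1} and~\ref{remark 2.12.1}, since the sign $\pm\mathrm{id}$ cancels in the conjugations defining $\lambda$ and $\mathcal{B}$.
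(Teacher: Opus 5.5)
Your proposal is correct and matches the paper's proof. Both deduce the corollary by a diagram chase: Theorem~\ref{thm 2.11} gives the inner square, and the parabolic--orbifold correspondence (for opers via \cite{biswas2023parabolic}, for differential operators via \eqref{1.16}) gives the commuting trapeziums. Your version is more explicit than the paper's, which simply asserts that the trapeziums commute. In particular, you note that one must fix an anti-holomorphic lift $\sigma_Y$ of $\sigma_X$ normalizing $\Gamma$, so that the correspondence intertwines $\sigma_X^*\overline{(\cdot)}$ with $\sigma_Y^*\overline{(\cdot)}$. This makes explicit a hypothesis the paper uses tacitly from Remark~\ref{corollary 2.2} onward.
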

	\begin{proof}
		There is a natural bijection between the parabolic $\mathrm{SL}(r,\mathbb{C})$-opers on $X$ and the equivariant $\mathrm{SL}(r,\mathbb{C})$-opers on $Y$, proved in \cite{biswas2023parabolic}. Moreover, there is a bijective correspondence between $\mathcal{U} \subset H^0(X,\mathrm{Diff}^r\bigl( L_*^{1-r},L_*^{1+r}) \bigr)$ and $\mathcal{\widetilde{U}} \subset H^0(Y,\mathrm{Diff}^r\bigl( \mathbb{L}_*^{1-r},\mathbb{L}_*^{1+r}) \bigr)$. From the following diagram, it is easy to see that the outer rectangle commutes as the inner rectangle and other trapeziums commute:  
		   
	    \begin{center}
	    	\begin{tikzpicture}[scale=0.8, every node/.style={scale=0.85}]
	    		\node (TL_out) at (0, 5) {$\mathrm{SL}_X^p(r)$};
	    		\node (TR_out) at (6, 5) {$\mathrm{SL}_X^p(r)$};
	    		\node (BL_out) at (0, 0) {$\mathcal{U}$};
	    		\node (BR_out) at (6, 0) {$\mathcal{U}$};
	    		
	    		\node (TL_in)  at (1.8, 3.5) {$\mathrm{SL}_Y^\Gamma(r)$};
	    		\node (TR_in)  at (4.2, 3.5) {$\mathrm{SL}_Y^\Gamma(r)$};
	    		\node (BL_in)  at (1.8, 1.2) {$\widetilde{\mathcal{U}}$};
	    		\node (BR_in)  at (4.2, 1.2) {$\widetilde{\mathcal{U}}$};
	    		
	    		\draw[->, thick] (TL_out) -- (TR_out) node[midway, above] {$\beta$};
	    		\draw[->, thick] (TL_out) -- (BL_out);
	    		\draw[->, thick] (TR_out) -- (BR_out);
	    		\draw[->, thick] (BL_out) -- (BR_out) node[midway, below] {$\mathcal{B}$};
	    		
	    		\draw[->, thick] (TL_in) -- (TR_in) node[midway, below] {$\widetilde{\beta}$};
	    		\draw[->, thick] (TL_in) -- (BL_in) node[midway, right] {$\widetilde{\psi}$};
	    		\draw[->, thick] (TR_in) -- (BR_in) node[midway, right] {$\widetilde{\psi}$};
	    		\draw[->, thick] (BL_in) -- (BR_in) node[midway, below] {$\widetilde{\mathcal{B}}$};
	    		
	    		\draw[->, thick] (TL_out) -- (TL_in);
	    		\draw[->, thick] (TR_out) -- (TR_in);
	    		\draw[->, thick] (BL_out) -- (BL_in);
	    		\draw[->, thick] (BR_out) -- (BR_in);

	    	\end{tikzpicture}
	    \end{center}
	       This completes the proof.
	\end{proof}

\end{document}